\definecolor{dblue}{rgb}{0,0,0.70}
\newtheorem{theorem}
{Theorem}[section]	
\newtheorem*{theorem*}{Theorem}
\newaliascnt{lemma}{theorem}
\newtheorem{lemma}[lemma]{Lemma}
\newtheorem{claim}[theorem]{Claim}
\newtheorem*{lemma*}{Lemma}
\newaliascnt{proposition}{theorem}
\newtheorem{proposition}[proposition]{Proposition}
\newaliascnt{corollary}{theorem}
\theoremstyle{remark}
\newaliascnt{remark}{theorem}
\newtheorem{remark}[remark]{Remark}
\newaliascnt{question}{theorem}
\newtheorem{question}[question]{Question}
\newtheorem*{question*}{Question}
\newaliascnt{definition}{theorem}
\newtheorem{definition}[definition]{Definition}
\newaliascnt{example}{theorem}
\renewcommand{\restriction}{\mathbin\upharpoonright}
\newcommand{\axiom}[1]{\mathsf{#1}} 
\newcommand{\ZFC}{\axiom{ZFC}}
\newcommand{\AC}{\axiom{AC}}
\newcommand{\CH}{\axiom{CH}}
\newcommand{\AD}{\axiom{AD}}
\newcommand{\DC}{\axiom{DC}}
\newcommand{\ZF}{\axiom{ZF}}
\newcommand{\Ord}{\mathrm{Ord}}
\newcommand{\PFA}{\axiom{PFA}}
\newcommand{\A}{\axiom{A}}
\newcommand{\MRP}{\axiom{MRP}}
\newcommand{\UltimateL}{\textrm{Ultimate-}L}
\DeclareMathOperator{\CFA}{CFA}
\DeclareMathOperator{\cf}{cf}
\DeclareMathOperator{\dom}{dom}
\DeclareMathOperator{\rank}{rank}
\DeclareMathOperator{\Col}{Col}
\DeclareMathOperator{\Add}{Add}
\DeclareMathOperator{\tcl}{tcl}
\newcommand{\forces}{\mathrel{\Vdash}}
\newcommand\PP{\mathbb{P}}
\newcommand{\power}{\mathcal{P}}
\newcommand{\QQ}{\mathbb{Q}}
\newcommand{\RR}{\mathbb{R}}
\newcommand{\cD}{\mathcal D}
\newcommand{\cC}{\mathcal C}
\newcommand{\cQ}{\mathcal Q}
\newcommand{\cL}{\mathcal L}
\newcommand{\tup}[1]{\langle#1\rangle}
\author{David Asper\'o}
\author{Asaf Karagila}
\thanks{The first author acknowledges support of EPSRC Grant EP/N032160/1. The second author was partially supported by the Royal Society Newton International Fellowship, grant no.~NF170989.}
\thanks{This research was initiated when the authors were visiting fellows at the Isaac Newton Institute for Mathematical Sciences during the programme `Mathematical, Foundational and Computational Aspects of the Higher Infinite' (HIF) funded by EPSRC grant EP/K032208/1.}
\email[David Asper\'o]{d.aspero@uea.ac.uk}
\email[Asaf Karagila]{karagila@math.huji.ac.il}
\address{School of Mathematics,
University of East Anglia.
Norwich, NR4~7TJ, UK
}
\date{\today}
\subjclass[2010]{Primary 03E25; Secondary 03E55, 03E35, 03E57, 03A05}
\keywords{Axiom of Choice, forcing axioms, proper forcing, PFA, generic absoluteness, the Chang model}
\title[Dependent Choice, Properness, and Generic Absoluteness]{Dependent Choice, Properness,\\ and Generic Absoluteness}
\begin{document}
\begin{abstract}
We show that Dependent Choice is a sufficient choice principle for developing the basic theory of proper forcing, and for deriving generic absoluteness for the Chang model in the presence of large cardinals, even with respect to $\DC$-preserving symmetric submodels of forcing extensions. Hence, $\ZF+\DC$ not only provides the right framework for developing classical analysis, but is also the right base theory over which to safeguard truth in analysis from the independence phenomenon in the presence of large cardinals. We also investigate some basic consequences of the Proper Forcing Axiom in $\ZF$, and formulate a natural question about the generic absoluteness of the Proper Forcing Axiom in $\ZF+\DC$ and $\ZFC$. Our results confirm $\ZF + \DC$ as a natural foundation for a significant portion of ``classical mathematics'' and provide support to the idea of this theory being also a natural foundation for a large part of set theory. 
\end{abstract}
\maketitle
\section{Introduction}
The Axiom of Choice is a staple of modern mathematics which requires little introduction nowadays. However, as the reader is probably aware, the Axiom of Choice also has some controversial implications, especially in analysis (e.g., the existence of sets of reals which are not Lebesgue-measurable or without the Baire property). The controversial aspect of such results, and of the Axiom of Choice itself, ultimately stems from the non-constructive\footnote{In most senses of `constructive' as used in mathematics, as well as in the colloquial sense of the word.} nature of this axiom. Appealing to the Axiom of Choice means that we are unable to properly construct and define our objects, but merely prove their existence. To some people this approach is lacking substance; they would argue that the objects we construct in mathematics should be explicit. At any rate, effective proofs, or proofs which explicitly exhibit or construct some object, are perceived almost universally as being more satisfactory than mere existence proofs. 

By weakening the Axiom of Choice to one of its countable variants we obtain the \textit{Principle of Dependent Choice} ($\DC$), namely the statement that every partial order without maximal points has an infinite chain. This axiom is regarded  by most mathematicians as innocuous, as it does not give rise to the construction of pathological objects in analysis and elsewhere in mathematics.\footnote{For example, Solovay produced a model of $\ZF+\DC$ in which all sets of reals are Lebesgue measurable and have the Baire property (\cite{Solovay:1970}).} By its very formulation $\DC$ is simply telling us that we can construct things recursively in $\omega$ steps just by showing that the construction may proceed through each successive step. Even if the object promised to exist is still non-constructive, this level of non-constructiveness is felt to be a minimum that we should be happy to tolerate if we are to develop any reasonable mathematics of infinite objects.\footnote{Notably, if we are to develop real analysis in any reasonable way.} In fact, and precisely because performing recursive constructions over the natural numbers is something we do all the time in many areas of mathematics, $\DC$ promises to be a particularly successful axiom in those areas. This is actually true for classical analysis, most of which can be carried out in the theory $\ZF+\DC$.

The above considerations on the naturalness of $\DC$ are meant as a motivational preamble to the work in the present paper. This is a paper in two parts. The first part studies to what extent is $\ZF+\DC$ an adequate base to develop the theory of proper forcing, whereas the second part deals with the problem of deriving generic absoluteness for (some substantial class of) forcing notions in the theory $\ZF+\DC$ supplemented with sufficiently strong large cardinal axioms. These two lines of research may look rather disconnected, but they share a goal, namely to see whether $\ZF+\DC$, which we have hinted at being a successful foundation for large portions of classical mathematics, is also a reasonable vantage point from which to look into more purely set-theoretic areas. 

Our motivation for studying properness in $\ZF+\DC$ originated in the earlier realization, by the second author, that $\DC$ enables one to prove, and in fact is equivalent to, the existence of countable elementary submodels of $V_\alpha$, for all infinite ordinals $\alpha$;\footnote{This was most likely observed by many others in the past, as George Boolos made the observation in \cite{BoolosJeffery} that $\DC$ is equivalent to the L\"owenheim--Skolem theorem.} in particular, $\ZF+\DC$ provides a non-vacuous playground in which to study properness. On the other hand, the goal of our study of generic absoluteness over the base theory $\ZF+\DC$ was to elucidate whether or not the use of the Axiom of Choice in its full power is an overkill for proving generic absoluteness for $L(\RR)$, or even for the Chang model, from large cardinals. This question is motivated by the role played by $\ZF+\DC$ as a valid base theory for real analysis and the fact that most of real analysis lives in $L(\RR)$.\footnote{Another, less obvious, point of connection between our two lines of research is given by the fact that the first proof of generic absoluteness from large cardinals in $\ZFC$ went through the construction, assuming the existence of a supercompact cardinal, of a semiproper forcing iteration, with semiproperness being a natural outgrowth of the notion of properness. At the beginning of our research we focused on the study of properness and observed the efficiency of $\ZF+\DC$ in this context, to the point of allowing one to force the Proper Forcing Axiom from the existence of a supercompact cardinal. But then, given the above connection, we set to explore whether $\ZF+\DC$ could also have something interesting to say regarding generic absoluteness.}

As already mentioned, $\ZF+\DC$ is ``the right theory for classical analysis''. In this paper we argue that it is also the right basic theory for handling much of the forcing technology relevant to the real line. This argument is provided through the results in the first and second part of this paper. In the first part of the paper we observe that Dependent Choice is enough---in fact the minimum needed---of the Axiom of Choice to develop the theory of proper forcing, and that proper forcing must preserve Dependent Choice. We prove the consistency of the Proper Forcing Axiom ($\PFA$) by forcing, with a proper forcing notion, over any model of $\ZF+\DC$ in which there is a supercompact cardinal. We also show that assuming $\PFA$, a stronger version of Dependent Choice, namely $\DC_{\omega_1}$, must hold. As a consequence, all typical applications of $\PFA$ in $\ZFC$ at the level of $H(\omega_2)$ (see, for example, \cite{Baumgartner} or \cite{Moore:2005}) also go through in $\ZF+\DC$; in particular, $\PFA$ implies, in $\ZF+\DC$, that $2^{\aleph_0}=\aleph_2$ and that $H(\omega_2)$ has a definable well-ordering (\autoref{pfa-implies-wo}). We do not know how to prove that the Proper Forcing Axiom \textit{does not} imply the Axiom of Choice,\footnote{Without assuming the consistency, with $\ZF+\DC$, of the existence of a Reinhardt cardinal with a supercompact cardinal below it.} but we do suggest a possible way to prove such a statement assuming some generic absoluteness of the Proper Forcing Axiom (together with reasonable large cardinal assumptions). 

The second part of the paper begins with a (rather lengthy) discussion of the foundational significance of the generic absoluteness phenomenon. We then observe that generic absoluteness for the Chang model fails relative to forcing notions destroying $\DC$, and  move on to proving that if $\ZF+\DC$ holds and the supercompact cardinals form a stationary class in $\Ord$, then the theory of the Chang model is generically absolute with respect to forcing that preserves Dependent Choice (\autoref{gen-abs+}),\footnote{Monro showed in \cite{Monro:1983} that Dependent Choice is not always preserved by forcing in $\ZF$.}$^{,}$\footnote{In the $\ZFC$ context, the invariance of the theory of the Chang model under set theory in the presence of large cardinals (one supercompact cardinal, or a proper class of Woodin cardinals, is enough) is a well-known result of Woodin (\cite{Woodin:PNAS1988}).} and in fact with respect to $\DC$-preserving symmetric submodels of $\DC$-preserving forcing extensions (\autoref{symm-gen-abs}). 
We conclude the paper with some foundational remarks motivated by the proof of this last result.

\subsection{The basic definitions}
All our proofs will be carried out in $\ZF$. We will redefine the notion of $H(\kappa)$ to work in a context where the Axiom of Choice may fail. We then use this definition when discussing proper forcing. In our paper a notion of forcing is a partially ordered set $\PP$ with a maximum, denoted by $1_\PP$ (or just $1$). If $p,q\in\PP$, we say that $q$ \textit{extends} $p$, or that it is a \textit{stronger condition}, if $q\leq p$. If two conditions have a common extension, we say that they are \textit{compatible}. Rather than writing $\forces_\PP$ to denote the forcing relation corresponding to a notion of forcing $\PP$, we will omit the subscript and simply write $\forces$ whenever $\PP$ is clear from the context. A forcing $\PP$ is \textit{weakly homogeneous} if for every two conditions $p$ and $q$ there is an automorphism $\pi$ such that $\pi p$ and $q$ are compatible. If $\PP$ is weakly homogeneous, then for every $p\in\PP$ and every sentence $\varphi$ in the forcing language for $\PP$, $p\forces\varphi$ if and only if $1\forces\varphi$. Also, if $\PP$ and $\QQ$ are weakly homogeneous forcing notions, then so is $\PP\times\QQ$.

We denote $\PP$-names by $\dot x$, and if $x$ is a set in the ground model, we denote the canonical name for $x$ by $\check x$. If $\dot x$ and $\dot y$ are $\PP$-names, we say that \textit{$\dot y$ appears in $\dot x$} if there is some condition $p$ such that $\tup{p,\dot y}\in\dot x$. We then recursively define \[\dot x\restriction p=\{\tup{p_*,\dot y\restriction p}\mid\dot y\text{ appears in }\dot x,\,p_*\leq p,\, p_*\forces\dot y\in\dot x\}.\]

Not assuming the Axiom of Choice, the terminology, definitions and notations surrounding cardinals become vague. We consider \textit{cardinals} in the broader sense: if $x$ can be well-ordered, then $|x|$ is the least ordinal that has a bijection with $x$; otherwise, $|x|$ is the \textit{Scott cardinal} of $x$: \[|x|=\{y\mid\exists f\colon x\to y\text{ a bijection}\}\cap V_\alpha,\] for the least $\alpha$ for which the intersection is non-empty. Greek letters (with the notable exception of $\varphi$ and $\pi$) will denote ordinals, and specifically $\aleph$ numbers when we assume that they denote infinite cardinals.

We write $|x|\leq|y|$ to denote that there is an injection from $x$ into $y$, and $|x|\leq^*|y|$ to denote that $x$ is empty or else there is a surjection from $y$ onto $x$. The notation $|x|<|y|$ denotes the statement $|x|\leq |y|$ and $|y|\nleq|x|$, and $|x|<^*|y|$ is defined from $\leq^*$ similarly.\footnote{Note that $|x|<|y|$ is equivalent to $|x|\leq|y|$ and $|x|\neq|y|$, as the Cantor--Bernstein theorem is provable in $\ZF$. For $\leq^*$, however, the dual Cantor--Bernstein theorem is not provable without choice (see \cite{Jech:AC1973}, Chapter 11, Problem 8), and so $|x|<^*|y|$ is in fact a stronger statement than $|x|\leq^*|y|$ and $|x|\neq|y|$.}

Finally, for a set $x$, the \textit{Hartogs number of $x$} is $\aleph(x)=\min\{\kappa\mid\kappa\nleq|x|\}$ and the \textit{Lindenbaum number of $x$} is $\aleph^*(x)=\min\{\kappa\mid\kappa\nleq^*|x|\}$.
\section{Hereditary sets}
Structures of the form $H(\kappa)$, where $\kappa$ is a cardinal, are one of the key constructions in set theory. These are usually\footnote{Assuming choice, that is.} defined as $\{x\mid|\tcl(x)|<\kappa\}$. Assuming that $\kappa$ is a regular cardinal, these sets are models of $\ZFC^-$, namely all the axioms of $\ZFC$ except the Power Set Axiom (which can hold for ``small sets'' when $\kappa$ is sufficiently large). In almost all the definitions we have we can replace $H(\kappa)$ with $V_\alpha$ for ``sufficiently nice $\alpha$'', meaning one where enough of Replacement is satisfied. But this requires more care in noting what Replacement axioms are being used, and virtually nobody wants that. Not when a nice alternative like $H(\kappa)$ is available. 

In the absence of the Axiom of Choice, however, we want to have $H(\kappa)$ reflect the fact that choice fails. This cannot be done when taking the same definition at face value, since it implies that if $x\in H(\kappa)$ then $x$ can be well-ordered. This means that we need to modify the definition of $H(\kappa)$.
\begin{definition}
Given a set $x$, $H(x)$ is the class $\{y\mid|x|\nless^*|\tcl(y)|\}$. We denote by $H({<}x)$ the union of the classes $H(y)$ such that $|y|<^*|x|$. 
\end{definition}
Note that for an infinite cardinal $\kappa$, $H(\kappa)=H({<}\kappa^+)$ and if $\kappa$ is a regular limit cardinal, then $H({<}\kappa)=H(\kappa)$.
\begin{proposition}Let $\kappa>\omega$ be an uncountable cardinal. The following properties hold:
\begin{enumerate}
\item $H(\kappa)$ is a transitive set of height $\kappa$.
\item $H(\kappa)=V_\kappa$ if and only if $\kappa$ is a strong limit cardinal; in other words, $\kappa\nleq^*|V_\alpha|$ for all $\alpha<\kappa$. 
\item If $\kappa$ is regular, then $H(\kappa)\models\ZF^-$, and if it also a strong limit then $H(\kappa)\models\ZF_2$.
\end{enumerate}
\end{proposition}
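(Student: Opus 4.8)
The plan is to work throughout with the unwound form of the definition: for an $\aleph$ number $\kappa$, $y\in H(\kappa)$ says exactly that $\kappa\nleq^*|\tcl(y)|$, i.e. $\aleph^*(\tcl(y))\le\kappa$. The engine driving every part is a monotonicity principle for $\leq^*$: it is transitive, it is preserved under passing to nonempty supersets, and a surjection $X\to Y$ witnesses $\aleph^*(Y)\le\aleph^*(X)$; in particular $A\subseteq B$ gives $\aleph^*(A)\le\aleph^*(B)$. For (1), transitivity of $H(\kappa)$ is then immediate, since $z\in y\in H(\kappa)$ forces $\tcl(z)\subseteq\tcl(y)$ and hence $\aleph^*(\tcl(z))\le\aleph^*(\tcl(y))\le\kappa$. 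To see that $H(\kappa)$ is a set of height exactly $\kappa$ I would isolate the lemma that for any transitive set $T$ the map $z\mapsto\rank(z)$ sends $T$ onto $\rank(T)$, so $\rank(T)\leq^*|T|$. Applying this to $T=\tcl(y)$ (where $\rank(\tcl(y))=\rank(y)$) shows that $\kappa\nleq^*|\tcl(y)|$ forces $\rank(y)<\kappa$, for otherwise $\kappa\le\rank(y)\leq^*|\tcl(y)|$ would give $\kappa\leq^*|\tcl(y)|$. Thus $H(\kappa)\subseteq V_\kappa$, so it is a set by Separation, and since an ordinal $\alpha$ lies in $H(\kappa)$ iff $\kappa\nleq^*|\alpha|$, i.e. iff $\alpha<\kappa$, its height is exactly $\kappa$.

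For (2), the inclusion $H(\kappa)\subseteq V_\kappa$ is part of (1), so only $V_\kappa\subseteq H(\kappa)$ is in question. Every $y\in V_\kappa$ has $\tcl(y)\subseteq V_\alpha$ for $\alpha=\rank(y)<\kappa$, so if $\kappa\nleq^*|V_\alpha|$ for all $\alpha<\kappa$ then $\kappa\nleq^*|\tcl(y)|$, i.e. $y\in H(\kappa)$. Conversely, if some $\alpha<\kappa$ satisfied $\kappa\leq^*|V_\alpha|$, then $V_\alpha\in V_\kappa$ but $V_\alpha\notin H(\kappa)$ because $\tcl(V_\alpha)=V_\alpha$; this is exactly the asserted characterization of strong-limitness.

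For (3) with $\kappa$ regular, Extensionality and Foundation are inherited from transitivity, while Union and Separation are immediate because the witnessing sets are subsets of some $\tcl(a)$ with $a\in H(\kappa)$. Pairing reduces to the remark that under a hypothetical surjection of $\tcl(\{a,b\})$ onto $\kappa$ the two images of $\tcl(a)$ and $\tcl(b)$ are bounded subsets of $\kappa$ whose union still has size $<\kappa$, which needs only cardinal addition below $\kappa$; Infinity holds because $\kappa>\omega$ gives $\aleph^*(\omega)=\aleph_1\le\kappa$. The substantive axiom, and the one forcing regularity, is Replacement, which I would verify in its functional form so as to avoid choosing witnesses. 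Given $a\in H(\kappa)$ and a formula defining inside $H(\kappa)$ a function $F$ on $a$, the map $x\mapsto\rank(F(x))$ sends $a$ into $\kappa$ with image of size $\leq^*|a|$; as $\kappa\nleq^*|a|$ (since $a\subseteq\tcl(a)$) and $\kappa$ is regular, an unbounded image would have size $\kappa$ and yield $\kappa\leq^*|a|$, so the image is bounded. Writing $\tcl(F[a])=\bigcup_{x\in a}\tcl(\{F(x)\})$ as a union of $\leq^*|a|$-many sets none of which surjects onto $\kappa$, the same regularity argument applied twice---first to bound $g[\tcl(\{F(x)\})]$ for each $x$ under a putative surjection $g$, then to bound the supremum of those bounds over $x\in a$---rules out any surjection of $\tcl(F[a])$ onto $\kappa$, so $F[a]\in H(\kappa)$.

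I expect this last step to be the main obstacle. Under choice one simply picks a witnessing function and bounds $|F[a]|\le|a|$; the point of the choiceless argument is that Replacement must be phrased functionally and the bound on $\tcl(F[a])$ extracted from regularity together with $\kappa\nleq^*|a|$, which is also why one should not expect full Collection from $\ZF$ alone. Finally, when $\kappa$ is additionally a strong limit, (2) gives $H(\kappa)=V_\kappa$, and to obtain $\ZF_2$ I would check the second-order axioms in $V_\kappa$: second-order Separation and Power Set follow from closure of $V_\kappa$ under genuine subsets and, by strong-limitness, genuine power sets, and second-order Replacement is the argument just given with $F$ now an arbitrary class function on $V_\kappa$ coded in $V$.
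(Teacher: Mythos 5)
Your proof is correct and follows essentially the same route as the paper's: the rank-surjection lemma (a transitive set maps onto its rank via $z\mapsto\rank(z)$) to get $H(\kappa)\subseteq V_\kappa$, the strong-limit characterization for (2), and regularity forcing boundedness of maps into $\kappa$ for Replacement in (3). The only difference is one of detail: your two-stage regularity argument showing $\kappa\nleq^*|\tcl(F[a])|$ spells out carefully what the paper compresses into the single remark that $f\colon x\to H(\kappa)$ implies $f\in H(\kappa)$.
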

\begin{proof}Clearly, $H(\kappa)$ is a transitive class. We want to argue that $H(\kappa)\subseteq V_\kappa$, which implies that $H(\kappa)$ is a set. For this we need the following claim: for a set $x$ the following are equivalent:\begin{enumerate}
\renewcommand{\theenumi}{\alph{enumi}}
\item The von Neumann rank of $x$ is $\alpha$.
\item $\alpha=\min\{\beta\mid \tcl(x)\cap V_{\beta+1}\setminus V_\beta=\varnothing\}$.
\end{enumerate}
In other words, if $x$ has rank $\alpha$, then its transitive closure contains elements of any rank below $\alpha$.

We prove this equivalence by $\in$-induction on $x$. Suppose that $\rank(x)=\alpha$. Then for all $\beta<\alpha$ there is some $y\in x$ such that $\alpha>\rank(y)\geq\beta$. By the induction hypothesis, $\tcl(y)\cap V_{\gamma+1}\setminus V_\gamma\neq\varnothing$ for all $\gamma<\beta$, and therefore $\tcl(x)\cap V_{\gamma+1}\setminus V_\gamma\neq\varnothing$; moreover $y\in\tcl(x)\cap V_{\beta+1}\setminus V_\beta$, so indeed $\alpha$ is the minimal ordinal satisfying (b). On the other hand, assuming (b) it is clear that $\rank(x)\geq\alpha$. But if $\rank(x)>\alpha$, then for some $y\in x$ the rank of $y$ is at least $\alpha$, in which case the above argument shows that $\tcl(x)\cap V_{\alpha+1}\setminus V_\alpha\neq\varnothing$, so $\alpha$ is not the minimal ordinal satisfying (b).

Using this equivalence we argue that if $\rank(x)>\kappa$, then $x\notin H(\kappa)$. This is clear, since $\tcl(x)$ can be mapped onto $\kappa$ by mapping each element to its rank. So indeed $H(\kappa)\subseteq V_\kappa$, and so $H(\kappa)$ is a transitive set.

If $\kappa$ is a strong limit cardinal, then by definition $V_\alpha\in H(\kappa)$ for all $\alpha<\kappa$, and equality follows. 

Finally, if $\kappa$ is regular, and $x\in H(\kappa)$, then every function from $x$ to $\kappa$ is bounded, in particular, if $f\colon x\to H(\kappa)$, then $f\in H(\kappa)$ as well, so $H(\kappa)$ satisfies second-order Replacement, and by transitivity of $H(\kappa)$ we get all the other axioms of $\ZF^-$. And of course, if $\kappa$ is also a strong limit cardinal, the above implies that $H(\kappa)=V_\kappa\models\ZF_2$.
\end{proof}
Unfortunately, despite what we said at the beginning of this section, $\ZF$ does not guarantee that there are any uncountable regular cardinals. This means, in turn, that we cannot guarantee that $H(\kappa)$ will satisfy Replacement axioms. Nevertheless, models of the form $H(\kappa)$ are somewhat more natural than models of the form $V_\alpha$, as they provide us with an ordinal structure which is slightly more ``reasonable'' and they will also satisfy Mostowski's collapse lemma (as opposed to $V_{\omega+\omega}$, for example, which will not). 
\begin{remark}
It is interesting to note that the requirement that $\kappa$ is an $\aleph$ is crucial for the proof that $H(\kappa)$ is even a set. The definition can be used with any cardinal $x$, but if $x$ is not well-ordered, then $|x|\nless^*\kappa$ for any ordinal $\kappa$, meaning that $\Ord\subseteq H(x)$. In the definition, we could have artificially cropped $H(x)$ when $x$ is not well-orderable, in order to ensure that $H(x)$ remains a set, but we want to somehow reflect the structure of the universe in a natural way and this cropping seems artificial and not justified by the mathematics at this point.
\end{remark}
\begin{question}Is there any particular use for $H(x)$ for an arbitrary $x$ in $\ZF$? Do the proper classes $H(x)$ all satisfy any particular weak set theory of interest?
\end{question}
\begin{proposition}
The class $\{H({<}\kappa)\mid\kappa\text{ is an }\aleph\text{-number}\}$ is a continuous filtration of the universe.
\end{proposition}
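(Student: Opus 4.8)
The plan is to first distill a transparent description of $H({<}\kappa)$ for an $\aleph$-number $\kappa$, and then read off the three defining features of a continuous filtration---monotonicity, exhaustiveness, and continuity at limits---from that description. The formula I would aim for is
\[H({<}\kappa)=\bigcup_{\alpha<\kappa}H(\alpha),\]
where $\alpha$ now ranges over \emph{ordinals} below $\kappa$. The inclusion $\supseteq$ is immediate: for an ordinal $\alpha<\kappa$ we have $\alpha<^*\kappa$ (on ordinals $<^*$ and $<$ coincide), so $H(\alpha)$ is one of the classes whose union defines $H({<}\kappa)$.

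The heart of the matter, and the step I expect to be the only real obstacle, is the reverse inclusion: a witness $y$ with $|y|<^*\kappa$ must contribute nothing beyond the ordinals below $\kappa$. The key point is that since $\kappa$ is \emph{well-ordered}, the weaker relation $|y|\leq^*\kappa$ already forces $y$ to be well-orderable: a surjection $f\colon\kappa\twoheadrightarrow y$ has a section $b\mapsto\min f^{-1}(b)$, which is an injection $y\hookrightarrow\kappa$. Hence $|y|$ is an ordinal, and $|y|<^*\kappa$ then reads simply as $|y|<\kappa$. Because $H(y)$ depends only on the Scott cardinal $|y|$, we get $H(y)=H(|y|)$ with $|y|<\kappa$; taking the union over all such $y$ yields $\subseteq$ and establishes the displayed formula. (As a sanity check, this specializes correctly to the two facts noted after the definition, namely $H(\kappa)=H({<}\kappa^+)$ and $H({<}\kappa)=H(\kappa)$ for regular limit $\kappa$.)

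With the formula in hand the remaining points are routine bookkeeping. Monotonicity, $H({<}\kappa)\subseteq H({<}\lambda)$ for $\kappa\leq\lambda$, is immediate since $\{\alpha\mid\alpha<\kappa\}\subseteq\{\alpha\mid\alpha<\lambda\}$. For continuity at a limit $\aleph$-number $\kappa$, I would use that $\kappa$, being a limit cardinal, is the supremum of the $\aleph$-numbers $\lambda<\kappa$ and that every ordinal $\alpha<\kappa$ lies below some such $\lambda$; thus $\{\alpha\mid\alpha<\kappa\}=\bigcup_{\lambda<\kappa}\{\alpha\mid\alpha<\lambda\}$ with $\lambda$ ranging over $\aleph$-numbers, and feeding this into the formula gives $H({<}\kappa)=\bigcup_{\lambda<\kappa}H({<}\lambda)$. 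For exhaustiveness I would take an arbitrary set $z$ and put $\mu=\aleph^*(\tcl(z))$; by definition of the Lindenbaum number $\mu\nleq^*|\tcl(z)|$, which in particular gives $\mu\nless^*|\tcl(z)|$, i.e.\ $z\in H(\mu)=H({<}\mu^+)$. Hence $\bigcup_\kappa H({<}\kappa)=V$. Finally one should note that each $H({<}\kappa)$ is a genuine set: every $H(\alpha)$ is a set (it equals $H(|\alpha|)$, which is contained in $V_{|\alpha|}$ by the rank argument of the previous proposition), so the $\kappa$-indexed union is a set by Replacement.
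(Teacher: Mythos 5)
Your proof is correct and follows essentially the same route as the paper: the paper dismisses monotonicity, transitivity and continuity as trivial and establishes exhaustiveness by exactly your argument, taking $\kappa=\aleph^*(\tcl(x))$ so that $x\in H(\kappa)=H({<}\kappa^+)$. Your reduction $H({<}\kappa)=\bigcup_{\alpha<\kappa}H(\alpha)$ --- via the observation that a surjection from the well-ordered set $\kappa$ admits a section, so any $<^*$-witness below $\kappa$ is automatically well-orderable --- is a correct and useful elaboration of what the paper leaves implicit.
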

\begin{proof}
Trivially this class is a continuous sequence of transitive sets. If $x$ is any set, then taking $\kappa=\aleph^*(\tcl(x))$ implies that $x\in H(\kappa)=H({<}\kappa^+)$.
\end{proof}
\begin{remark}
It is consistent that $\aleph^*(\RR)=\kappa$ is a limit cardinal, in which case $\RR\in H(\kappa)$ but $\RR\notin H({<}\kappa)$. This is a consequence of the Axiom of Determinacy (see Exercise~28.16 in \cite{Kanamori}), for example, but the consistency of this statement does not require any large cardinals.\footnote{Adding $\beth_\omega=\kappa$ Cohen reals with automorphisms \`a la Cohen's first model, and taking the supports to be generated by sets of size strictly less than $\kappa$, the standard proof shows that there is no surjection from $\RR$ onto $\kappa$, but there is a surjection on any smaller ordinal.}
\end{remark}
\section{Dependent Choice}
\begin{definition}
Let $T$ be a tree, i.e.\ a well-founded partial order whose lower cones are chains. 
\begin{enumerate}
\item We say that $T$ is \textit{$\kappa$-closed} if for every $\alpha<\kappa$, every increasing sequence of nodes in $T$ indexed by $\alpha$ has an upper bound.
\item We say that $B\subseteq T$ is a \textit{branch} if it is a maximal chain, and a \textit{cofinal branch} is a branch which meets all the levels of the tree.
\end{enumerate}
\end{definition}
\begin{definition}
Let $\kappa$ be an infinite cardinal. We denote by $\DC_\kappa$ the following statement: Every $\kappa$-closed tree without maximal elements has a chain of order type $\kappa$. We use $\DC_{<\kappa}$ to abbreviate $\forall\lambda<\kappa,\DC_\lambda$, and for $\kappa=\aleph_0$ we simply write $\DC$.\footnote{$\DC$ has many known equivalents \cite[Form~43]{HowardRubin:1998}.}
\end{definition}
Among the basic consequences of $\DC$ we have the Axiom of Choice for countable families of non-empty sets, the countability of every countable union of countable sets, the regularity of $\omega_1$, and many others.
\begin{lemma}\label{lemma:WO trees}
Every well-orderable tree has a maximal chain. In particular, if $T$ is a $\kappa$-closed well-orderable tree without maximal elements, then $T$ has a chain of order type at least $\kappa$.
\end{lemma}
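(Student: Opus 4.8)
The plan is to handle the two assertions separately, the key idea being that well-orderability lets us replace the usual Zorn's-lemma construction of a maximal chain (which would require choice) by an explicit transfinite recursion, thereby keeping everything inside $\ZF$. For the first assertion I would fix a well-ordering of $T$, giving an enumeration $\tup{t_\xi\mid\xi<\lambda}$ with $\lambda=|T|$, and build a set $C$ greedily: recursing on $\xi$, admit $t_\xi$ into $C$ exactly when $t_\xi$ is comparable with every element admitted at an earlier stage. Once the well-ordering is fixed this is a bona fide recursion, requiring no further appeal to choice.

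I would then verify that $C$ is a maximal chain. It is a chain because if $t_\eta,t_\xi\in C$ with $\eta<\xi$, the admission rule applied at stage $\xi$ guarantees that $t_\xi$ is comparable with $t_\eta$. For maximality, suppose some $t_\xi\in T\setminus C$ were comparable with every element of $C$; then its rejection at stage $\xi$ would supply a $t_\eta\in C$ with $\eta<\xi$ incomparable with it, a contradiction. This proves the first assertion. Note that any chain in a well-founded partial order is automatically well-ordered, so $\otp(C)$ is a well-defined ordinal.

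For the ``in particular'' clause, take $T$ to be $\kappa$-closed without maximal elements and let $C$ be a maximal chain as above, with $\alpha=\otp(C)$; the claim is that $\alpha\geq\kappa$. Suppose not. Enumerating $C$ in increasing order exhibits it as an increasing $\alpha$-sequence of nodes with $\alpha<\kappa$, so $\kappa$-closure yields an upper bound $u\in T$; since $T$ has no maximal elements there is some $w>u$. Then $w>u\geq c$ for every $c\in C$, so $w$ is comparable with all of $C$, while $w\notin C$ (otherwise $w\leq u$, contradicting $w>u$). Hence $C\cup\{w\}$ is a chain properly extending $C$, contradicting maximality, and therefore $\alpha\geq\kappa$, so $C$ is the desired chain. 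The verifications throughout are elementary; the only point genuinely requiring attention is to keep the whole argument choice-free, which is precisely what the well-orderability hypothesis secures---so I anticipate no real obstacle beyond bookkeeping.
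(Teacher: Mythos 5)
Your proof is correct and is essentially the paper's own argument: the paper's proof consists of the single line ``Enumerate $T$ and proceed by recursion,'' and your greedy admission recursion along a fixed enumeration, followed by the closure-plus-no-maximal-elements argument for the ``in particular'' clause, is exactly the intended fleshing-out of that sentence.
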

\begin{proof}
Enumerate $T$ and proceed by recursion.
\end{proof}
\begin{theorem}
The following are equivalent:
\begin{enumerate}
\item $\DC$.
\item The L\"owenheim--Skolem theorem for countable languages: every structure in a countable language has a countable elementary submodel.
\item For every $\alpha\geq\omega$ and every countable $A\subseteq V_\alpha$ there is a countable elementary submodel $M$ of $V_\alpha$ such that $A\subseteq M$.
\item For every $\alpha\geq\omega$ there is a countable elementary submodel $M\prec\tup{V_\alpha,\in}$.
\end{enumerate}
\end{theorem}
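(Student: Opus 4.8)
The plan is to prove the chain of equivalences $(1)\Rightarrow(2)\Rightarrow(3)\Rightarrow(4)\Rightarrow(1)$, relying on the fact that $(3)\Rightarrow(4)$ is trivial (take $A=\varnothing$). The real content lies in the implications $(1)\Rightarrow(2)$, which carries out the usual L\"owenheim--Skolem construction using $\DC$, and in the reversal $(4)\Rightarrow(1)$, which is the crux of the whole theorem.

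For $(1)\Rightarrow(2)$ I would run the standard downward L\"owenheim--Skolem argument, but carefully using $\DC$ in place of well-orderings. Given a structure $\mathfrak{A}$ in a countable language $\mathcal L$, one wants to close a countable set under ``Skolem functions''. In the absence of $\AC$ one cannot pick a single canonical Skolem function, but one does not need to: one only needs, at each finite stage, to augment a countable set $M_n$ to a countable $M_{n+1}$ that contains witnesses for all existential formulas with parameters in $M_n$. Since $M_n$ is countable and $\mathcal L$ is countable, there are only countably many relevant existential requirements, so the set of ``one-step extensions'' forms a tree (or, more directly, one applies countable choice, which follows from $\DC$, to select witnesses once the requirements are enumerated). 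Taking $M=\bigcup_n M_n$ yields a countable elementary submodel by the Tarski--Vaught test. To get $(3)$ from $(2)$, I would simply apply $(2)$ to the structure $\langle V_\alpha,\in\rangle$ expanded by constants naming the elements of the given countable set $A$; the resulting countable elementary submodel contains $A$.

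The \textbf{hard part} is the reversal $(4)\Rightarrow(1)$, showing that the mere existence of countable elementary submodels of each $\langle V_\alpha,\in\rangle$ suffices to recover $\DC$. Here I would start with an arbitrary $\aleph_0$-closed tree $T$ (equivalently, a nonempty partial order in which every element has a proper extension) and aim to build an infinite chain. The natural move is to fix an $\alpha$ large enough that $T$ and the relevant structure on $T$ lie in $V_\alpha$, apply $(4)$ to obtain a countable $M\prec\langle V_\alpha,\in\rangle$ with $T\in M$, and then work inside $M$. Because $M$ is \emph{countable}, it can be enumerated in order type $\omega$ externally, and this external enumeration is exactly what lets us perform the recursive choice of tree nodes that $\DC$ would otherwise provide: at stage $n$ we pick the extension of the current node that appears first in the fixed enumeration of $M\cap T$. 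Elementarity guarantees that suitable extensions always exist inside $M$ (since $T$ has no maximal elements and this is expressible in $\langle V_\alpha,\in\rangle$), so the recursion never stalls and produces a chain of order type $\omega$.

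The subtle point I would watch most carefully is ensuring that the recursion in $(4)\Rightarrow(1)$ is genuinely choice-free: the countable enumeration of $M$ is a single object fixed at the outset, so defining the chain by ``take the $\le$-first available extension'' is an honest recursion on $\omega$ with no hidden appeal to choice, and the whole construction thus goes through in plain $\ZF$ once the single elementary submodel is granted. I would also note the mild bookkeeping that $T$ being $\aleph_0$-closed is not even needed for chains of order type $\omega$—any partial order without maximal elements works—so the statement of $\DC$ is recovered in full. This closes the loop and establishes the equivalence.
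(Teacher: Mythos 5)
Your overall architecture (the cycle $(1)\Rightarrow(2)\Rightarrow(3)\Rightarrow(4)\Rightarrow(1)$, the $\DC$-based L\"owenheim--Skolem construction, and the idea of extracting a branch from the countable trace of a tree inside an elementary submodel by recursing along a fixed enumeration) matches the paper's proof. But there is a genuine gap in your implication $(4)\Rightarrow(1)$: you start with an \emph{arbitrary} tree $T$ without maximal elements and then ``apply $(4)$ to obtain a countable $M\prec\tup{V_\alpha,\in}$ with $T\in M$.'' Statement $(4)$ gives you only the bare existence of \emph{some} countable elementary submodel of $V_\alpha$; it gives you no control whatsoever over its elements, so there is no reason for your chosen $T$ to lie in $M$. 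The ability to prescribe that $T\in M$ is exactly what statement $(3)$ provides (take $A=\set{T}$), so as written your argument proves $(3)\Rightarrow(1)$ and leaves $(4)$ out of the cycle. The paper closes this hole by arguing contrapositively: if $\DC$ fails, then some $V_\alpha$ already contains a counterexample tree (the nonexistence of an order-type-$\omega$ chain is absolute to a limit $V_\alpha$ above the rank of the tree), so $V_\alpha$ satisfies the \emph{parameter-free} sentence ``there exists a counterexample to $\DC$''; by elementarity $M$ must then contain some such $T$, and your enumeration-recursion applied to $T\cap M$ (which is a countable subtree without maximal elements, again by elementarity) produces a chain and hence a contradiction. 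Your direct construction cannot be salvaged using $(4)$ alone, because an undefinable $T$ need not be captured by any particular countable elementary submodel.

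A smaller point on $(1)\Rightarrow(2)$: your parenthetical suggestion that one can ``apply countable choice to select witnesses once the requirements are enumerated'' at each stage does not by itself suffice, since the requirements at stage $n+1$ depend on the set $N_n$ produced by the earlier choices; stringing together $\omega$ many applications of countable choice is itself an instance of dependent choice. Your primary framing via the tree of one-step extensions (to which $\DC$ is applied once) is the correct repair, and is how the paper justifies the construction; just make sure that is the version you actually commit to.
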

The observation that (1) and (2) are equivalent is easy to make, but is attributed to George Boolos, and appears in \cite{BoolosJeffery} as Exercise 13.3.\footnote{Other sources online direct to the third edition of the book, which was unavailable to the authors, but upon examining the fourth and fifth editions, this exercise seems to have been removed.}
\begin{proof}
The proof that (1) implies (2) is the usual proof of the L\"owenheim--Skolem theorem, noting that it does not use more than $\DC$. Namely, given a countable\footnote{`Countable' means of course injectable into $\omega$, and therefore in particular well-orderable. The fact that the relevant sets are well-orderable is used throughout this proof.} structure $M$ in a relational language $\cL$ (every constant and function can be seen as a relation), we define $N\subseteq M$ by recursion:
\begin{enumerate}
\item $N_0=\varnothing$.
\item Suppose that $N_k$ was defined and is countable. For every formula of the form $\exists y\varphi(y,\vec x)$, and every finite sequence $\vec n$ from $N_k$, choose $y_{\varphi,\vec n}$ such that $M\models\varphi[y_{\varphi,\vec n},\vec n]$. Note that there are only countably many formulae and countably many finite sequences, so we need to make countably many choices, which is doable by $\DC$. Then we define \[N_{k+1}=N_k\cup\{y_{\varphi,\vec n}\mid\varphi\text{ is a formula}, \vec n\in N_k^{|\vec n|}\}.\] This is again a countable set.
\end{enumerate}
Finally, let $N=\bigcup_{k\in\omega} N_k$. Then $N$ is a countable substructure in the language $\cL$. It is not hard to verify that $N\prec M$. We needed $\DC$, rather than just the axiom of choice for countable families of sets, since the definition of each $N_k$ depends on the previous choices made.

The implications $(2)\implies(3)$ and $(3)\implies(4)$ are easy and trivial, respectively. The last implication is obtained by noting that if $\DC$ fails, then there is some $\alpha$ such that $\DC$ fails in $V_\alpha$. Taking a countable elementary submodel $M$ of $V_\alpha$, there is some $T\in M$ such that $T$ is a counterexample to $\DC$. By elementarity, $T\cap M$ is a subtree of $T$ without maximal elements. Hence, since $T\cap M$ is countable, we may easily build an infinite branch of $T\cap M$ by recursion using a well-order on $T\cap M$ (\autoref{lemma:WO trees}). But this is a contradiction.
\end{proof}

Of course, in the above equivalence, we can replace $V_\alpha$ by $H(\aleph_\alpha)$, or any other filtration of the universe.
\part{CCC, properness, and forcing axioms without choice}
\section{Proper Forcing}
\begin{definition}
Let $\PP$ be a notion of forcing. We say that $\PP$ is \textit{proper} if for any sufficiently large $\kappa$\footnote{I.e., for a tail of $\kappa$.} and every countable elementary submodel $M\prec (H(\kappa),\in,\PP)$, if $p\in\PP\cap M$, then $p$ has an extension which is $M$-generic. Namely, there is some $q\leq p$ such that for any open dense $D\in M$, $D\cap M$ is predense below $q$.
\end{definition}

Without choice properness can be a bit quirky.
\begin{proposition}
The following are equivalent:
\begin{enumerate}
\item $\DC$.
\item $\Col(\omega,\omega_1)$ is not proper.
\item There exists a forcing which is not proper.
\end{enumerate}
\end{proposition}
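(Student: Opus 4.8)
The plan is to establish the cycle $(1)\Rightarrow(2)\Rightarrow(3)\Rightarrow(1)$. The implication $(2)\Rightarrow(3)$ is immediate, since $\Col(\omega,\omega_1)$ is itself a forcing notion, so the real work lies in $(1)\Rightarrow(2)$ and $(3)\Rightarrow(1)$.

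For $(1)\Rightarrow(2)$ I would combine two observations. First, a routine density argument, valid in plain $\ZF$, shows that $\Col(\omega,\omega_1)$ adds a surjection of $\omega$ onto $\omega_1$, i.e.\ $1\forces$``there is a surjection $\check\omega\to\check\omega_1$''. Second, I would show that, under $\DC$, a proper forcing cannot collapse $\omega_1$; the two together say that $\Col(\omega,\omega_1)$ is not proper. The preservation argument is the classical one, and the point is to check that it uses nothing beyond $\DC$: suppose $\PP$ is proper and $p\forces\dot f\colon\check\omega\to\check\omega_1$. Choose $\kappa$ in the tail witnessing properness with $\PP,\dot f,p\in H(\kappa)$, and use the characterisation of $\DC$ above (in its $H(\kappa)$ form, with a prescribed countable subset) to fix a countable $M\prec(H(\kappa),\in,\PP)$ with $\dot f,p\in M$. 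Let $q\le p$ be $M$-generic and put $\gamma=\sup(M\cap\omega_1)$, which is below $\omega_1$ because $\DC$ makes $\omega_1$ regular. For each $n$ the open dense set of conditions deciding $\dot f(\check n)$ lies in $M$, so by $M$-genericity its trace on $M$ is predense below $q$; by elementarity each such condition forces $\dot f(\check n)$ to equal some ordinal in $M\cap\omega_1$, hence below $\gamma$. Thus $q\forces\rng(\dot f)\subseteq\check\gamma$, so $q$ forces $\dot f$ not to be onto $\omega_1$. Applying this to the canonical name for the collapsing surjection contradicts properness.

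For $(3)\Rightarrow(1)$ I would prove the contrapositive: if $\DC$ fails then every forcing $\PP$ is vacuously proper, so no non-proper forcing exists. The engine is the fact, isolated in the proof of the $\DC$ characterisation above, that the failure of $\DC$ annihilates countable elementary submodels of $H(\kappa)$. Concretely, fix a counterexample tree $\hat T$---with no maximal element and no infinite chain in $V$---and let $\kappa$ range over $\aleph$-numbers with $\hat T\in H(\kappa)$. Transitivity of $H(\kappa)$ gives that $\hat T$ still has no maximal element there, and ``no infinite chain'' is downward absolute, so $H(\kappa)\models\neg\DC$; since $\hat T$ has no infinite chain in $V$ at all, this persists for every such $\kappa$, i.e.\ on a tail. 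Now suppose towards a contradiction that some countable $M\prec(H(\kappa),\in)$ existed. By elementarity there is $T'\in M$ that $H(\kappa)$ regards as a counterexample to $\DC$, and $T'\cap M$ is a countable, hence well-orderable, subtree of $T'$ without maximal elements; by \autoref{lemma:WO trees} it carries an infinite chain $C$. Since $C\subseteq T'$ and $T'\in H(\kappa)$, a transitive-closure bound places $C$ in $H(\kappa)$, contradicting that $T'$ has no infinite chain there. Finally, every $M\prec(H(\kappa),\in,\PP)$ is in particular elementary in $(H(\kappa),\in)$, so there is likewise no countable $M\prec(H(\kappa),\in,\PP)$; the defining clause of properness is then vacuously true on a tail of $\kappa$, so $\PP$ is proper.

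I expect the crux to be $(3)\Rightarrow(1)$: one must check that the obstruction to countable elementary submodels is present on a whole tail of $\kappa$ rather than at a single level, and that the chain manufactured inside $M$ genuinely belongs to $H(\kappa)$. Both points hinge on the absoluteness of ``tree without an infinite chain'' between $H(\kappa)$ and $V$ and on bounding the transitive closure (equivalently the rank) of the chain by that of the ambient tree. The implication $(1)\Rightarrow(2)$, by contrast, is essentially the textbook $\omega_1$-preservation proof, the only subtlety being the bookkeeping that confirms $\DC$ suffices in place of full choice.
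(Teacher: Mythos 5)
Your proposal is correct and follows essentially the same route as the paper: the cycle $(1)\Rightarrow(2)\Rightarrow(3)\Rightarrow(1)$, with $(1)\Rightarrow(2)$ being the standard $M$-generic-condition argument that a proper forcing cannot collapse $\omega_1$ (using regularity of $\omega_1$ under $\DC$ to bound $M\cap\omega_1$), and $(3)\Rightarrow(1)$ via the vacuity of properness when $\DC$ fails. The only difference is that you spell out in full the tail-of-$\kappa$ vacuity argument, which the paper compresses into one sentence by citing its earlier equivalence of $\DC$ with the existence of countable elementary submodels; your expanded version is a faithful rendering of that argument.
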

\begin{proof}
Assume $\DC$ holds and let $\PP$ be a proper forcing. Suppose $\dot f$ is a name such that for some condition $p\in\PP$, $p\forces\dot f\colon\check\omega\to\check\omega_1$. Let $M$ be a countable elementary submodel of a large enough $H(\kappa)$ such that $\PP,p,\dot f\in M$. Since $M$ is an elementary submodel, $\omega\in M$, and therefore for each $n<\omega$, there is a predense set $D_n\in M$ whose conditions all decide the value of $\dot f(\check n)$, which by elementarity is an ordinal in $M$. Let $q\leq p$ be an $M$-generic condition. Then $q$ must force that the range of $\dot f$ must in fact lie in $M\cap\omega_1$. However, $\DC$ implies that $\omega_1$ is regular, and so this set is bounded. In particular, this means that $q$ forces that $\dot f$ has a bounded range and therefore is not surjective. In turn this means that for any name $\dot f$ there is no condition forcing that $\dot f$ is a surjection from $\omega$ onto $\omega_1$. Therefore $\DC$ implies that no proper forcing can collapse $\omega_1$ and, in particular, $\Col(\omega,\omega_1)$ is not a proper forcing.

The implication from (2) to (3) is trivial. Finally, if $\DC$ fails, then the definition of properness holds vacuously by the definition of ``sufficiently large $\kappa$'', thus making every forcing proper.
\end{proof}
\begin{remark}
We could have replaced ``sufficiently large'' by explicitly requiring $\kappa$ to be any fixed cardinal such that $H(\kappa)$ has both the power set of $\PP$ and the basic tools needed for defining the forcing relation. One could also define a hierarchy by saying that $\PP$ is ``proper below $\kappa$'', where $\kappa$ is fixed in the definition of properness, and then define proper as ``for all $\kappa$, $\PP$ is proper below $\kappa$''. However, we feel that the above is a good definition as it is flexible enough to allow for a certain degree of freedom in choosing $\kappa$ and it provides us with this nice characterization of $\DC$.

On the other hand, if one decides that it is best to require something explicit about $\kappa$, or stick to the ``properness below $\kappa$'' definition, then the failure of $\DC$ at $H(\omega_1)$ (e.g.\ if there is a Dedekind-finite set of reals) will imply that every forcing is proper, whereas this will not be the case if $\DC$ first fails at some point above $H(\omega_1)$. So we would be getting an odd equivalence of ``every forcing is proper'' given that choice of `properness'; avoiding this oddity is a good motivation for picking the definition of properness as we did above.
\end{remark}
\begin{proposition}\label{prop:sigma-closed-implies-proper}
Every $\sigma$-closed forcing is proper.
\end{proposition}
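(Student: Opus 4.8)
The plan is to split on $\DC$. If $\DC$ fails then, as observed above, the properness requirement is satisfied vacuously (there are no countable elementary submodels of $H(\kappa)$ for sufficiently large $\kappa$), so every forcing is proper and there is nothing to prove. Hence I would assume $\DC$ and run the classical argument, taking care that each choice made is canonical so that nothing beyond $\DC$ is used.

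Assume $\DC$, fix $\kappa$ large enough that $H(\kappa)$ contains $\power(\PP)$ and the apparatus of the forcing relation, and fix a countable $M \prec (H(\kappa),\in,\PP)$ with $p \in \PP \cap M$; such an $M$ exists by the equivalence between $\DC$ and the L\"owenheim--Skolem theorem proved earlier. Being countable, $M$ is well-orderable, so I fix a well-order of $M$. The open dense subsets of $\PP$ lying in $M$ form a subset of $M$, hence a countable family, which I enumerate as $\langle D_n : n \in \omega \rangle$.

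The heart of the proof is to build a descending chain $p = p_0 \ge p_1 \ge \cdots$ with $p_n \in M$ and $p_{n+1} \in D_n \cap M$ by recursion on $\omega$: at stage $n$, density of $D_n$ gives some $d \le p_n$ in $D_n$, and elementarity (using $p_n, D_n \in M$) yields such a $d$ in $M$; I then let $p_{n+1}$ be the $<$-least such $d$ in the fixed well-order of $M$. The point to emphasize is that this recursion is deterministic, so it is an ordinary recursion on $\omega$ and invokes no choice. By $\sigma$-closure the sequence $\langle p_n : n \in \omega\rangle$ has a lower bound $q \le p_0 = p$. To see that $q$ is $M$-generic, take any open dense $D \in M$; then $D = D_n$ for some $n$, and $q \le p_{n+1} \in D \cap M$, so every $r \le q$ lies below $p_{n+1}$ and is therefore compatible with this element of $D \cap M$. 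Thus $D \cap M$ is predense below $q$.

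I do not anticipate a genuine obstacle: once $\DC$ supplies the countable elementary submodel $M$ (and with it the well-orderability of $M$), the textbook construction goes through unchanged. The only subtlety is to keep the construction honestly within $\ZF + \DC$ by making the recursive choices canonical via the well-order of $M$, so that $\DC$ is used only to produce $M$ and not a second time to thread the sequence together.
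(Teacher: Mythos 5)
Your proof is correct and follows essentially the same route as the paper's: enumerate the dense open sets in $M$, use a well-order of the countable model $M$ to make each step of the descending sequence canonical (the paper does this by taking the least element of a fixed enumeration of $\PP\cap M$), and apply $\sigma$-closure to obtain the $M$-generic lower bound. The only cosmetic difference is that you make the vacuous case (failure of $\DC$) explicit, whereas the paper leaves it implicit in the universal quantification over countable elementary submodels.
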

\begin{proof}
Let $\PP$ be a $\sigma$-closed forcing. If $M$ is a countable elementary submodel of some $H(\kappa)$ with $\PP\in M$, then enumerate the dense open sets in $M$ as $\{D_n\mid n\in\omega\}$ and enumerate $\PP\cap M$ as $\{p_n\mid n\in\omega\}$, and for any $p\in\PP\cap M$, recursively construct a decreasing sequence such that $q_0=p$ and $q_{n+1}$ is the least $p_k$ such that $p_k\in D_n$ and $p_k\leq q_n$. Since $\PP$ is $\sigma$-closed, there is some lower bound $q$ of all the $q_n$'s and it is clear that $q$ is an $M$-generic condition.
\end{proof}
One ambition might be to prove that every ccc forcing is proper, but without the Axiom of Choice chain conditions are tricky to even formulate. Mirna D\v{z}amonja suggested in private communication to the authors to follow Mekler's footsteps in \cite{Mekler:1984} and formulate ccc, at least in the presence of $\DC$, by saying that every condition is $M$-generic for any countable elementary submodel $M$. Of course, under this definition it is trivial that every ccc forcing is proper.
\begin{theorem}\label{thm:cohen-is-ccc}
Let $X$ be any set, and let $\Add(\omega,X)$ denote the forcing whose conditions are finite partial functions $p\colon X\times\omega\to 2$, ordered by reverse inclusion. Then $\DC$ implies that $\Add(\omega,X)$ is a ccc forcing, for any $X$.
\end{theorem}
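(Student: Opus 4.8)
The plan is to prove this using the Mekler-style reformulation of ccc adopted immediately before the statement: I must show that for every sufficiently large $\kappa$, every countable elementary submodel $M\prec(H(\kappa),\in,\Add(\omega,X))$, and every condition $p\in\Add(\omega,X)$, the condition $p$ is $M$-generic, i.e.\ that for each open dense $D\in M$ the set $D\cap M$ is predense below $p$. The single use of $\DC$ occurs at the outset: by the equivalence of $\DC$ with the L\"owenheim--Skolem theorem established above (in the version for filtrations of the form $H(\kappa)$ noted in the remark following it), such countable $M$ exist, so that the assertion is not vacuous. Beyond guaranteeing the existence of $M$, the entire argument will run in plain $\ZF$. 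The value of the Mekler reformulation is precisely that it lets me sidestep the $\Delta$-system lemma, which is the engine of the classical $\ZFC$ proof and the step that genuinely consumes choice when $X$ is uncountable.

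So I would fix $M$, a condition $p$, and an open dense $D\in M$, and take an arbitrary $q\le p$; the goal is to produce some $r\in D\cap M$ compatible with $q$. The key construction is to split $q$ relative to $M$: write $q=q_0\cup q_1$, where $q_0=q\restriction\{(x,n)\in\dom(q)\mid x\in M\}$ records the coordinates whose first component lies in $M$, and $q_1$ records the rest. Since $\dom(q)$ is finite this is a finite case split needing no choice. I would then observe that $q_0\in M$: its domain consists of pairs $(x,n)$ with $x\in M$ and $n\in\omega\subseteq M$, so $\dom(q_0)\subseteq M$, and a finite function whose domain and range are subsets of $M$ is itself an element of $M$.

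Because $q_0\in M$ and $D$ is dense, elementarity of $M$ yields some $r\in D\cap M$ with $r\le q_0$. The heart of the argument is checking that $r$ is compatible with $q$, and this is where the splitting pays off. As $r\in M$ is a finite set, all of its elements, and hence all first components of its coordinates, lie in $M$; therefore $\dom(r)$ is disjoint from $\dom(q_1)$, whose coordinates have first component outside $M$. Consequently any coordinate in $\dom(r)\cap\dom(q)$ already lies in $\dom(q_0)$, and there $r$ agrees with $q_0$, hence with $q$, since $r\supseteq q_0$. Thus $r\cup q$ is a function, so $r$ and $q$ are compatible. As $q\le p$ was arbitrary, $D\cap M$ is predense below $p$; as $p$ and $D$ were arbitrary, $p$ is $M$-generic, which is exactly ccc in the adopted sense.

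The main obstacle is conceptual rather than technical: it is recognizing that the Mekler definition converts the problem from a global antichain-counting statement (the choice-heavy $\Delta$-system route) into a purely local compatibility check against a single countable model. Once that reframing is in place, the only real content is the observation that a condition $r$ living inside $M$ cannot ``see'' the coordinates of $q$ that escape $M$, so $r$ and $q$ can clash only on the part of $q$ already belonging to $M$, where $r$ was explicitly arranged to extend $q_0$. The sole appeal to $\DC$ is then the existence of $M$ itself.
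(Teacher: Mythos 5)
Your proposal is correct and follows essentially the same route as the paper: split a condition into the part whose coordinates lie in $M$ (which is then an element of $M$) and the part whose coordinates escape $M$, use elementarity and density to find an element of $D\cap M$ extending the first part, and observe that it cannot clash with the second part since all coordinates of a finite condition in $M$ lie in $M$. The only cosmetic difference is that the paper phrases the splitting as $p'=p\cap M$ and shows the maximum condition is $M$-generic directly, whereas you work below an arbitrary $q\le p$; these are equivalent.
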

Of course we can immediately assume that $X$ is not empty, otherwise this is an atomic forcing, which is entirely uninteresting (and it is certainly proper). This theorem is also immediately true for any well-orderable set $X$, as then $\Add(\omega,X)$ is also well-orderable and therefore admits maximal antichains and the standard $\ZFC$ arguments work with it without using choice. Nevertheless, we will only use the assumption that $X$ is not empty in the following proof.
\begin{proof}
Suppose that $M\prec H(\kappa)$ for any large enough $\kappa$ such that $X$, $\Add(\omega,X)$ and $D$ are all in $M$, where $D$ is a dense subset of $\Add(\omega,X)$. Let $p$ be any condition in $\Add(\omega,X)$, and let $p'=p\cap M\in M$. Then there is some $q\in D\cap M$ which is compatible with $p'$ by predensity of $D$. Moreover, $\dom (p\setminus p')\cap M=\varnothing$, so $p$ is also compatible with $q$. In other words, $\varnothing$, the maximum condition of $\Add(\omega,X)$, is $M$-generic, and so every condition is $M$-generic, for every $M$.
\end{proof}
\begin{theorem}\label{thm:proper-preserves-dc}
If $\PP$ is proper and $\DC$ holds, then $1\forces\DC$.
\end{theorem}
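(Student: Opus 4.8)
The plan is to reduce the statement to the following claim: whenever $p\forces$ ``$\dot T$ is an $\aleph_0$-closed tree without maximal elements'', then $p\forces$ ``$\dot T$ has a chain of order type $\omega$''. This suffices for $1\forces\DC$, since any tree $T$ witnessing a failure of $\DC$ in a generic extension $V[G]$ is of the form $\dot T^G$ for some name $\dot T$, and some condition in $G$ forces that $\dot T$ is an $\aleph_0$-closed tree without maximal elements; the claim then produces an infinite chain through $T$ in $V[G]$, a contradiction. To prove the claim I argue by contradiction: suppose $p'\leq p$ and $p'\forces$ ``$\dot T$ has no chain of order type $\omega$''.

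Working in $V$, where $\DC$ holds, I fix a $\kappa$ large enough both to witness properness of $\PP$ and to support the forcing relation for $\PP$, and, using $\DC$ in $V$ via the L\"owenheim--Skolem theorem, I choose a countable elementary submodel $M\prec(H(\kappa),\in,\PP)$ with $p',\dot T\in M$. By properness, fix an $M$-generic condition $q\leq p'$, let $G$ be generic with $q\in G$, and set $T=\dot T^G$ and $M[G]=\{\dot x^G\mid\dot x\in M\text{ a }\PP\text{-name}\}$.

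The heart of the argument is that $M[G]$ is a \emph{countable} elementary submodel of $H(\kappa)^{V[G]}$. Countability holds already in $\ZF$: the map $\dot x\mapsto\dot x^G$ is a surjection from the countable (hence well-orderable) set $M\cap V^\PP$ onto $M[G]$, and a surjective image of a well-orderable set is well-orderable, so $M[G]$ is countable in $V[G]$. Granting this, since $q\leq p'$ forces that $\dot T$ is a tree without maximal elements, $H(\kappa)^{V[G]}$, and hence by elementarity $M[G]$, sees that $T$ has no maximal element; so for every $t\in T\cap M[G]$ there is $t'\in T\cap M[G]$ with $t<t'$. Thus $T\cap M[G]$ is a subtree of $T$ with no maximal elements; it is $\aleph_0$-closed, as every tree is (each finite increasing sequence is bounded by its top node), and it is well-orderable because $M[G]$ is countable. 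By \autoref{lemma:WO trees} it has a chain of order type at least $\omega$, and this chain is an infinite chain of $T$ lying in $V[G]$, contradicting the fact that $q\in G$ forces $\dot T$ to have no chain of order type $\omega$.

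The main obstacle is verifying $M[G]\prec H(\kappa)^{V[G]}$ without appealing to choice. I would establish this by the Tarski--Vaught test, using $M$-genericity of $q$: given a formula $\varphi$ and names $\vec{\dot a}\in M$ with $H(\kappa)^{V[G]}\models\exists y\,\varphi(y,\vec{\dot a}^G)$, the set $D$ of conditions that either force $\varphi(\dot b,\vec{\dot a})$ for some name $\dot b\in H(\kappa)$ or force $\forall y\,\neg\varphi(y,\vec{\dot a})$ is dense (its density is a $\ZF$ consequence of the forcing theorem) and definable over $H(\kappa)$ from parameters in $M$, hence $D\in M$. Since $D\cap M$ is predense below $q$ and $q\in G$, some $s\in G\cap D\cap M$ forces $\varphi(\dot b,\vec{\dot a})$ for a name $\dot b$; elementarity of $M$ lets us take $\dot b\in M$, so $\dot b^G\in M[G]$ is the required witness. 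The delicate points, all of which go through in $\ZF+\DC$, are that witnessing names can be found inside $H(\kappa)$ (this is why $\kappa$ is chosen to carry the forcing relation) and that the relevant dense set is genuinely an element of $M$ rather than merely definable over it.
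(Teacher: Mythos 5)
Your proof is correct and follows essentially the same route as the paper's: take a countable elementary submodel $M$, an $M$-generic $q\leq p$, observe that the trace of the tree on $M$ is a countable subtree without maximal nodes, and apply \autoref{lemma:WO trees}. The only difference is presentational: the paper verifies the needed properties of $M\cap\dot T$ directly via the dense sets $D_n\in M$ of conditions producing nodes of level $n$, whereas you package the same $M$-genericity argument into the general fact that $M[G]\prec H(\kappa)^{V[G]}$; both are fine in $\ZF+\DC$.
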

\begin{proof}
Let $\dot T$ be a $\PP$-name for a tree of height $\omega$ without maximal nodes and let $p\in\PP$. Let $M$ be a countable elementary submodel of some sufficiently large $H(\kappa)$ such that $\dot T$, $p$, $\PP\in M$. 

Let $q$ be an $M$-generic condition extending $p$ and let $\dot T_*=M\cap\dot T$. For each $n<\omega$, let $D_n$ be the collection of conditions $p\in \PP$ for which there is some $\PP$-name $\dot t$ such that $p\forces``\dot t$ is a node in $\dot T$ of level $n$''. For every $n$, $D_n$ is a dense subset of $\PP$ in $M$. Hence, $D_n\cap M$ is predense below $q$ for each $n$, and as a consequence $q$ forces that $\dot T_*$ is a subtree of $\dot T$ of height $\omega$. By a similar argument, we also have that $q$ forces $\dot T_*$ not to have any maximal nodes. But then, since of course $\dot T_*$ is forced to be countable, $q$ forces that $\dot T_*$ has a branch by \autoref{lemma:WO trees}. We have shown that every condition in $\PP$ can be extended to a condition adding a branch through $\dot T$, which means that $1\forces\DC$.
\end{proof}
The above theorem shows that one cannot violate $\DC$ with a proper forcing, which is somewhat similar to the fact that one cannot violate $\AC$ with any forcing. This fact provides an argument that $\ZF+\DC$ is a good base theory for working with proper forcing. This argument is strengthened by the following theorem.
\begin{theorem}
\begin{enumerate}
\item Two-step iterations of proper forcings are proper.
\item Countable support iterations of proper forcings are proper.
\end{enumerate}
\end{theorem}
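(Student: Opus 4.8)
The overall strategy is to reduce $M$-genericity for the iteration to $M$-genericity for the factors, and to exploit the fact that the definition of properness asks only for the \emph{existence} of an $M$-generic extension; consequently each stage requires only a single selection from a nonempty set, and the sole genuinely iterated construction has length $\omega$, which is exactly what $\DC$ underwrites. For part (1), write the two-step iteration as $\PP * \dot\QQ$ with $\PP$ proper and $1 \forces_\PP$ ``$\dot\QQ$ is proper''. Fix a large $\kappa$, a countable $M \prec \tup{H(\kappa), \in, \PP, \dot\QQ}$, and $\tup{p, \dot q} \in (\PP * \dot\QQ) \cap M$. First I would use properness of $\PP$ to fix an $M$-generic $p' \le p$. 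Since $\PP$ is proper and $\DC$ holds, \autoref{thm:proper-preserves-dc} gives $V[\dot G_\PP] \models \DC$, so countable elementary submodels behave well in the extension; the standard argument (using no more than $\DC$) shows that $p'$ forces $M[\dot G_\PP]$ to be a countable elementary submodel of $H(\kappa)^{V[\dot G_\PP]}$. As $\dot\QQ$ is forced proper and $\dot q \in M$, it follows that $p'$ forces $\dot q$ to have an $M[\dot G_\PP]$-generic extension in $\dot\QQ$.

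The key point is that we need not assemble these extensions into a single name by an appeal to fullness (the maximality principle), which would demand choices along a possibly uncountable antichain. Instead, let $E$ be the set of pairs $\tup{r, \dot t}$ with $r \le p'$ such that $r \forces$ ``$\dot t \le \dot q$ and $\dot t$ is $M[\dot G_\PP]$-generic for $\dot\QQ$''. By the previous paragraph $E$ is nonempty (this is built into the semantics of the forcing relation for $\exists$, so no choice is used), and I may simply pick one pair $\tup{r_0, \dot t_0} \in E$, a single selection from a nonempty set. Now $r_0 \le p'$ is still $M$-generic, since any extension of an $M$-generic condition is $M$-generic, and $\tup{r_0, \dot t_0} \le \tup{p, \dot q}$. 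The proof concludes with the standard $\ZF$-provable characterization that $\tup{r, \dot t}$ is $M$-generic for $\PP * \dot\QQ$ precisely when $r$ is $M$-generic for $\PP$ and $r \forces$ ``$\dot t$ is $M[\dot G_\PP]$-generic for $\dot\QQ$''. The thing to verify is that this characterization and the fact $M[\dot G_\PP] \prec H(\kappa)^{V[\dot G_\PP]}$ go through on the basis of $\ZF + \DC$ alone, with all the submodels supplied by \autoref{thm:proper-preserves-dc} together with the equivalence of $\DC$ with the L\"owenheim--Skolem property.

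For part (2), let $\tup{\PP_\alpha, \dot\QQ_\beta : \alpha \le \delta,\ \beta < \delta}$ be a countable support iteration whose iterands are each forced to be proper. I would induct on $\delta$, but on the standard strengthened statement that fixes an initial segment: for every countable $M \prec H(\kappa)$ containing the iteration, every $\alpha_0 \le \delta$ in $M$, and every $p \in \PP_\delta$ for which $p \restriction \alpha_0$ is $M$-generic for $\PP_{\alpha_0}$, there is an $M$-generic $q \le p$ with $q \restriction \alpha_0 = p \restriction \alpha_0$. The successor step is exactly part (1) applied to $\PP_{\gamma+1} = \PP_\gamma * \dot\QQ_\gamma$ relative to this hypothesis, and a limit $\delta$ of uncountable cofinality is immediate, since then every countable support is bounded below $\delta$ and the condition already lies in some $\PP_\alpha$ with $\alpha < \delta$. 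The crux is a limit $\delta$ with $\cf(\delta) = \omega$, handled by fusion: fix an increasing $\tup{\delta_n : n < \omega} \in M$ cofinal in $\sup(M \cap \delta)$ and an enumeration $\tup{D_n : n < \omega}$ of the dense open subsets of $\PP_\delta$ lying in $M$, then recursively build a descending $\tup{p_n : n<\omega}$ with $p_0 = p$, $p_{n+1} \restriction \delta_n = p_n \restriction \delta_n$, and $p_{n+1}$ chosen---via the induction hypothesis at stage $\delta_{n+1}$---so as to reflect $D_n$ into $M$ below $\delta_{n+1}$ while keeping $p_{n+1} \restriction \delta_{n+1}$ an $M$-generic condition. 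The fusion $q$ is then given by $q \restriction \delta_n = p_{n+1} \restriction \delta_n$ and $\supp(q) = \bigcup_n \supp(p_n)$.

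I expect the fusion limit to be where the main work lies, and it is also where $\DC$ earns its keep. On the one hand, the recursion producing $\tup{p_n}$ has length $\omega$ and each $p_{n+1}$ is selected from a nonempty subset of the set $\PP_\delta \subseteq H(\kappa)$; this is precisely the form of construction that $\DC$ sanctions, and the existence of a suitable $p_{n+1}$ at each step is guaranteed by the strengthened induction hypothesis, which is why fixing an initial segment is indispensable. On the other hand, one must check that $q$ is a legitimate condition and is $M$-generic: that $\supp(q)$ is countable is an instance of ``a countable union of countable sets is countable'', a consequence of $\DC$; that $q \le p_n$ for all $n$ follows from the stabilization $q \restriction \delta_n = p_{n+1}\restriction\delta_n$; and that $q$ is $M$-generic follows because every dense open $D \in M$ is some $D_n$ and was reflected at stage $n$. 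The delicate bookkeeping---tracing, coordinate by coordinate, how the reflection of $D_n$ together with the $M$-genericity of the initial segments forces $D_n \cap M$ to be predense below $q$, invoking \autoref{lemma:WO trees} to extract branches through the countable trees that arise---is the part that must be carried out with care to confirm that no choice beyond $\DC$ intervenes.
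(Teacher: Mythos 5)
Your overall strategy is the intended one---the paper's own proof of this theorem is essentially the remark that the usual $\ZFC$ arguments go through under $\DC$, and your accounting of where $\DC$ enters (length-$\omega$ recursions, countable unions of countable sets, \autoref{lemma:WO trees}) is accurate. But there are two concrete problems. First, your induction hypothesis for part (2) is too weak to drive the fusion. You quantify over actual conditions $p\in\PP_\delta$ with $p\restriction\alpha_0$ already $M$-generic; but at stage $n$ of the fusion, the member of $D_n\cap M$ into which $p_{n+1}$ must descend is not determined in $V$---it depends on the generic for $\PP_{\delta_n}$ below the $M$-generic condition $p_n\restriction\delta_n$. The standard statement (Theorem~2.7 of \cite{AbrahamHB:2010}, which is what the paper points to) therefore takes as input a $\PP_{\alpha_0}$-\emph{name} $\dot p\in M$ for a condition of $\PP_\delta$ whose restriction to $\alpha_0$ is forced into $\dot G_{\alpha_0}$, and produces an $M$-generic $q$ forcing $\dot p\in\dot G_\delta$. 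With your version the step ``$p_{n+1}$ reflects $D_n$ into $M$'' cannot be carried out.

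Second, you dismiss fullness too quickly. Picking a single pair $\tup{r_0,\dot t_0}$ from $E$ is fine for the two-step case \emph{provided} conditions of $\PP\ast\dot\QQ$ are defined by the requirement $p\forces\dot q\in\dot\QQ$ rather than $1\forces\dot q\in\dot\QQ$; and in a countable support iteration every coordinate of the fusion condition is a name of exactly this ``locally legitimate'' kind. In $\ZFC$ one repairs such names by the mixing lemma, which is an application of $\AC$ along a possibly uncountable antichain; without $\AC$ something must be said, and this is precisely the one point the paper's proof is devoted to. Its fix: given $p\forces\dot q\in\dot\QQ$, set $\dot q_*=\bigcup\set{1_\QQ\restriction p'\mid p'\perp p}\cup\dot q\restriction p$, a name satisfying $1\forces\dot q_*\in\dot\QQ$ and $p\forces\dot q_*=\dot q$, constructed with no choice at all. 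You should either incorporate this trick or commit to the weaker definition of condition throughout and re-verify the basic theory of countable support iterations under it; as written, the proposal does neither.
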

\begin{proof}
If $\DC$ fails, then this holds vacuously. If $\DC$ holds, the usual proofs as given in the context of $\ZFC$ also work here, noting that all relevant countable choices in the $\ZFC$ proofs can equally be made in the present context; for a reference see Theorem~2.7 in \cite{AbrahamHB:2010}. However, the following caveat is needed here.

In $\ZFC$ we usually define the iteration $\PP\ast\dot\QQ$ as a partial order on the collection of ordered pairs of the form $\tup{p,\dot q}$, where $p\in\PP$ and $\dot q$ is a $\PP$-name such that $1\forces\dot q\in\dot\QQ$. We could replace this with $p\forces\dot q\in\dot\QQ$, and while for a two-step iteration this is entirely inconsequential, for a countable support iteration this matters a lot.

Moving from $p\forces\dot q\in\dot\QQ$ to $1\forces\dot q\in\dot\QQ$ usually requires the Axiom of Choice. But we can circumvent this as follows: if $\dot q$ is a name which appears in $\dot\QQ$ and $p\forces\dot q\in\dot\QQ$, then define $\dot q_*$ as $\bigcup\{1_\QQ\restriction p'\mid p'\perp p\}\cup\dot q\restriction p$, and easily $1\forces\dot q_*\in\dot\QQ$ and $p\forces\dot q_*=\dot q$.
\end{proof}
\section{The Proper Forcing Axiom}
\begin{definition}
The Proper Forcing Axiom ($\PFA$) states that if $\PP$ is a proper forcing, then for every collection $\cD=\{D_\alpha\mid\alpha<\omega_1\}$ of dense open subsets of $\PP$ there exists a $\cD$-generic filter.
\end{definition}
\begin{proposition}
$\PFA$ implies $\DC_{\omega_1}$.
\end{proposition}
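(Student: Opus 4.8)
The plan is to prove this in two stages: first that $\PFA$ already secures $\DC$, and then to leverage $\DC$ together with $\PFA$ applied to a $\sigma$-closed forcing read off the given tree.

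First I would show that $\PFA$ implies $\DC$. Suppose toward a contradiction that $\DC$ fails. By the earlier Proposition characterizing $\DC$ (via $\Col(\omega,\omega_1)$), the failure of $\DC$ makes \emph{every} forcing proper; in particular $\Col(\omega,\omega_1)$, the forcing of finite partial functions $\omega\to\omega_1$ ordered by reverse inclusion, is proper. For $n<\omega$ let $E_n=\{p : n\in\dom p\}$ and for $\alpha<\omega_1$ let $F_\alpha=\{p : \alpha\in\rng p\}$; these form a family of $\omega_1$-many dense open subsets. Applying $\PFA$ yields a filter meeting all of them, whose union is a surjection $\omega\to\omega_1$ --- impossible in $\ZF$, since from a surjection $g\colon\omega\to\omega_1$ one reads off the injection $\alpha\mapsto\min g^{-1}(\alpha)$. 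Hence $\PFA$ implies $\DC$.

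Next, given an $\omega_1$-closed tree $T$ without maximal elements, I would fix a minimal element and pass to the cone above it, so that (after this harmless reduction) $T$ has a root serving as the maximum of the poset $\PP=(T,\geq_T)$ obtained by reversing the tree order. Since every increasing $\omega$-sequence in $T$ has an upper bound, $\PP$ is $\sigma$-closed, hence proper by \autoref{prop:sigma-closed-implies-proper}. For each $\alpha<\omega_1$ set $D_\alpha=\{t\in T : \otp(\{s : s<_T t\})\geq\alpha\}$, the nodes of level at least $\alpha$. Each $D_\alpha$ is open in $\PP$ (upward closed in $T$). The key point is that each $D_\alpha$ is dense, which I would verify by induction on $\alpha<\omega_1$: the successor step uses only the absence of maximal elements, while at a limit $\alpha$ I fix a cofinal $\omega$-sequence $\alpha_n\nearrow\alpha$ and, using $\DC$ (now available) together with the inductive density of the $D_{\alpha_n}$, build an increasing $\omega$-chain above the given node whose levels tend to $\alpha$; its upper bound, furnished by $\omega_1$-closedness, lies in $D_\alpha$. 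Finally I would apply $\PFA$ to the proper forcing $\PP$ and the family $\cD=\{D_\alpha : \alpha<\omega_1\}$ to obtain a $\cD$-generic filter $G$. In the tree order $G$ is downward closed, and any two of its elements are $\PP$-compatible, hence $T$-comparable, so $G$ is a chain; meeting $D_\alpha$ forces $\otp(G)\geq\alpha$ for every $\alpha<\omega_1$, whence $G$ contains a chain of order type $\omega_1$, as required.

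I expect the main obstacle to be the very first stage: recognizing that, without $\DC$, properness degenerates so that \emph{every} forcing becomes proper, and hence that $\PFA$ must be shown to rule this out before any of the tree-forcing machinery --- in particular the density of the $D_\alpha$ at limit levels --- can be run. Once $\DC$ is in hand, the remaining steps are routine adaptations of the standard $\ZFC$ argument.
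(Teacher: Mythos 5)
Your proof is correct and follows essentially the same route as the paper's: view the tree as a $\sigma$-closed, hence proper, forcing and apply $\PFA$ to the dense sets of nodes of level at least $\alpha$. The paper's version is a three-line sketch that simply asserts these sets are dense; your preliminary bootstrap of $\DC$ (via the degeneracy of properness when $\DC$ fails, applied to $\Col(\omega,\omega_1)$) and the inductive density verification at limit levels supply exactly the details the paper elides, and the $\DC$ step is genuinely needed there.
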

\begin{proof}
Let $T$ be a $\sigma$-closed tree of height $\omega_1$ and with no maximal elements. Then by \autoref{prop:sigma-closed-implies-proper}, $T$ is proper. Consider $D_\alpha=T\setminus T\restriction\alpha$, for $\alpha<\omega_1$. Then $D_\alpha$ is a dense open set, so by $\PFA$ there is a generic filter meeting each $D_\alpha$, which is a branch in $T$.
\end{proof}
\begin{theorem}\label{pfa-implies-wo}
$\PFA$ implies that $2^{\aleph_0}=2^{\aleph_1}=\aleph_2$ and that there is a well-order of $H(\aleph_2)$ of length $\omega_2$ definable over $H(\aleph_2)$ from parameters.
\end{theorem}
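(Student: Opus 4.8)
The plan is to combine $\PFA$ with $\DC_{\omega_1}$, which is available by the preceding proposition, and to verify that the classical $\ZFC$ arguments at the level of $H(\aleph_2)$ use nothing stronger. First I would record the structural consequences of $\DC_{\omega_1}$ that keep $H(\aleph_2)$ well-behaved: in the spirit of the consequences of $\DC$ listed after the definition of $\DC_\kappa$, one gets $\AC_{\omega_1}$ and the regularity of $\omega_1$ and of $\omega_2$, so that $\omega_2$ regular yields $H(\aleph_2)\models\ZF^-$ by the first proposition of Section~2, and both $\RR$ and $\power(\omega_1)$ are subsets of $H(\aleph_2)$. The whole theorem then reduces to producing the definable well-order: a well-order of $H(\aleph_2)$ of order type $\omega_2$ witnesses that $H(\aleph_2)$, and hence each of its subsets, is well-orderable of cardinality at most $\aleph_2$, and the remaining cardinal arithmetic will follow from this together with one application of $\PFA$ to Cohen forcing.

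The substantive step is the definable well-order, and here I would follow the coding method of Caicedo and Veli\v{c}kovi\'c (building on Todor\v{c}evi\'c and on Moore's Mapping Reflection Principle $\MRP$), adapting it to the choiceless setting. The heart of the matter is a coding lemma: fixing suitable parameters in $H(\aleph_2)$ (a ladder system on $\omega_1$ together with the auxiliary oscillation data), one shows that for every $A\subseteq\omega_1$ there is a proper forcing, equipped with an $\omega_1$-indexed family of dense open sets, whose generic adds a subset of $\omega_1$ that codes $A$ in a way which is uniformly $\Sigma_1$-definable over $H(\aleph_2)$ from the parameters; applying $\PFA$ to these $\aleph_1$ dense sets produces the code in $V$. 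Because the decoding relation is definable, one may then well-order $\power(\omega_1)$ by comparing, for each set, its least code in the order being built, and finally transfer this to all of $H(\aleph_2)$ using that each $y\in H(\aleph_2)$ is hereditarily of size at most $\aleph_1$, so $\tcl(y)$ is a surjective image of $\omega_1$ and $(\tcl(y)\cup\{y\},\in)$ is itself coded by a subset of $\omega_1$. The point to stress is that this well-order is genuinely definable and is never obtained by transporting a chosen well-order of $\RR$; that is exactly what keeps the construction choice-free.

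With the well-order in hand I would finish the cardinal arithmetic quickly. Since $\RR\subseteq H(\aleph_2)$ and $\power(\omega_1)\subseteq H(\aleph_2)$, and $H(\aleph_2)$ is well-ordered in order type $\omega_2$, both $2^{\aleph_0}$ and $2^{\aleph_1}$ are well-ordered cardinals that are at most $\aleph_2$. For the lower bound I would apply $\PFA$ to Cohen forcing $\Add(\omega,1)$, which is ccc and hence proper by \autoref{thm:cohen-is-ccc}: given any $f\colon\omega_1\to\RR$, the dense open sets $D_\alpha$ of conditions forcing the generic real to differ from $f(\alpha)$ at some coordinate (for $\alpha<\omega_1$) are met by a generic filter whose union is a real differing from every $f(\alpha)$; thus no $f\colon\omega_1\to\RR$ is onto, so $2^{\aleph_0}\geq\aleph_2$ and therefore $2^{\aleph_0}=\aleph_2$. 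Finally $2^{\aleph_0}\leq 2^{\aleph_1}\leq\aleph_2$ gives $2^{\aleph_1}=\aleph_2$ as well.

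The main obstacle I anticipate is neither the arithmetic nor the reduction but the faithful transcription of the coding construction into $\ZF+\DC_{\omega_1}$. Concretely, I would have to check that every recursion in the definition of the oscillation-coding forcing and in the verification of its properness has length at most $\omega_1$ and makes at most $\aleph_1$ choices at a time, so that it is justified by $\DC_{\omega_1}$ and $\AC_{\omega_1}$ rather than by full choice, and that the $\Sigma_1$-definability of the decoding survives verbatim. The delicate line is to ensure that nowhere does the argument quietly well-order $\RR$ or select representatives across the $\aleph_2$-many sets being ordered; keeping the entire construction anchored to objects of size at most $\aleph_1$, where $\DC_{\omega_1}$ and $\AC_{\omega_1}$ apply, is what makes this go through.
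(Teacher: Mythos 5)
Your overall architecture matches the paper's --- reduce everything to a definable well-order of $H(\aleph_2)$ obtained from a coding argument anchored to a ladder system chosen via $\DC_{\omega_1}$, then get $\aleph_2\leq 2^{\aleph_0}$ from one application of $\PFA$ to Cohen forcing --- but your key lemma is genuinely different. The paper routes the definable well-order through Moore's Mapping Reflection Principle: it observes that $\PFA\Rightarrow\MRP$ survives the loss of choice and that Moore's Section~4 argument (which already yields both $2^{\aleph_0}=2^{\aleph_1}=\aleph_2$ and the definable well-order of $H(\aleph_2)$) goes through from $\DC_{\omega_1}$ plus a ladder system, after three concrete modifications: requiring $\cf(\theta)>\omega_1$ in the statement of $\MRP$, replacing $H((2^{\omega_1})^+)$ by $H(\aleph^*(2^{\omega_1}))$, and noting that a ladder system and $\DC_{\omega_1}$ suffice for Moore's Proposition~4.2 and Theorem~4.3. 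You instead invoke the Caicedo--Veli\v{c}kovi\'c oscillation coding under $\axiom{BPFA}$. Both routes defer the bulk of the verification to ``check that a published $\ZFC$ coding argument uses only $\DC_{\omega_1}$, $\AC_{\omega_1}$ and a fixed ladder system,'' so they are at a comparable level of rigor; the paper's choice is slightly more economical because Moore's theorem is stated exactly in the form needed and the required modifications can be listed explicitly, whereas the oscillation machinery would need its own (plausible but unchecked) transcription. Two small points to tighten in your version: the lower bound argument via $\Add(\omega,1)$ shows there is no surjection $\omega_1\to\RR$, which yields $\aleph_1<2^{\aleph_0}$ only after you know $\RR$ is well-orderable, so it must come after the well-order (as you in fact arrange); and the transfer from $\power(\omega_1)$ to $H(\aleph_2)$ needs a word of care with this paper's surjective definition of $H(\aleph_2)$ --- that every $y\in H(\aleph_2)$ has $\tcl(y)$ a surjective image of $\omega_1$ is not immediate from the definition alone and is part of what the coding argument ultimately delivers.
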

\begin{proof}
First note that by $\DC_{\omega_1}$, we have that $\aleph_1\leq 2^{\aleph_0}$, and applying $\PFA$ to the binary tree $2^{<\omega}$ we get that $\aleph_1<2^{\aleph_0}$. Moreover, by $\DC_{\omega_1}$ we can choose a ladder system on $\omega_1$, i.e., a sequence $\tup{C_\alpha\mid\alpha<\omega_1\text{ is a limit ordinal}}$ such that each $C_\alpha$ is a cofinal subset of $\alpha$ of order type $\omega$. It then also follows that there is a sequence $\tup{S_\alpha\mid\alpha<\omega_1}$ of pairwise disjoint stationary subsets of $\omega_1$. 

Next we want to point out that Moore's proof that the Mapping Reflection Principle ($\MRP$) implies that $2^{\aleph_0}=2^{\aleph_1}=\aleph_2$ in \cite{Moore:2005} can be repeated under $\PFA$ even without choice. The following changes with respect to the proof as presented in Moore's paper are necessary:
\begin{enumerate}
\item Require that $\cf(\theta)>\omega_1$ in the statement of $\MRP$ (Definition~2.3).
\item Replace $H((2^{\omega_1})^+)$ with $H(\aleph^*(2^{\omega_1}))$ in Lemma~4.4.
\item Note that the existence of a ladder system and $\DC_{\omega_1}$ are enough for the proofs of Proposition~4.2 and Theorem~4.3 to go through.\qedhere
\end{enumerate}
\end{proof}

Thus, $\PFA$ implies $H(\omega_2)\models\AC$ and hence that all the richness of the usual $\ZFC$ combinatorics is available to this structure.

\begin{remark}
One could argue that the correct formulation of forcing axioms (and $\PFA$ specifically) should require that ${<}2^{\aleph_0}$ dense open sets admit a filter meeting them all, and add that $\aleph_1\neq 2^{\aleph_0}$ to avoid trivialities. In that case in models of $\ZF+\DC+(|x|<2^{\aleph_0}\rightarrow|x|\leq\aleph_0)$, e.g.\ in Solovay's model (see \cite{Solovay:1970}) or models of $\AD$, this type of axiom holds vacuously since by $\DC$ every countable family of dense open sets admits a filter meeting them, and every collection of strictly less than $2^{\aleph_0}$ sets is necessarily countable. We feel that this vacuity is a good reason not to adopt such a definition.

Of course, we can replace ${<}2^{\aleph_0}$ by ${<}^*2^{\aleph_0}$, namely require that there is a surjective map from the continuum onto the family of dense open sets but there is no surjection in the other direction. Since there is a surjection from the continuum onto $\omega_1$, and we require that $2^{\aleph_0}\neq\aleph_1$, there is no surjection in the other direction. Therefore formulating $\PFA$ this way implies that every collection of $\aleph_1$ dense open sets admits a generic filter. This is enough to prove $\DC_{\omega_1}$, and the above arguments then imply that $2^{\aleph_0}=\aleph_2$. 

Even weakening $\PFA$ to Martin's Axiom (assuming $\DC$ and defining ccc as ``every condition is $M$-generic for every countable model $M$''), the corresponding forcing axiom applied to $\Add(\omega,X)$, which is a ccc forcing by \autoref{thm:cohen-is-ccc}, would imply that if there is a surjection from the continuum onto $X$, then either there is a surjection from $X$ onto the reals or there is an injection from $X$ into the continuum. And in particular $\aleph_1<2^{\aleph_0}$.
\end{remark}

We finish this section with the outline of the consistency of $\PFA$ relative to a supercompact cardinal in $\ZF+\DC$. 

\begin{definition}[Woodin, Definition~220 in \cite{Woodin:2010}]\label{def-supercompact}
For an ordinal $\alpha$ we say that $\kappa$ is \textit{$V_\alpha$-supercompact} if there exists some $\beta>\alpha$ and an elementary embedding $j\colon V_\beta\to N$ such that:
\begin{enumerate}
\item $N$ is a transitive set and $N^{V_\alpha}\subseteq N$,
\item the critical point of $j$ is $\kappa$ (in particular $j$ is non-trivial),
\item $\alpha<j(\kappa)$.
\end{enumerate}
If $\kappa$ is $V_\alpha$-supercompact for all $\alpha$, we say that it is a \textit{supercompact} cardinal.
\end{definition}
Assuming the Axiom of Choice holds, this definition is of course equivalent to the standard definition by deriving fine and normal measures from the elementary embeddings.

\begin{proposition}\label{cons-PFA}
If $\kappa$ is a supercompact cardinal, then the countable support iteration of length $\kappa$ using lottery sums of counterexamples to $\PFA$ of minimal rank forces $\PFA$ in $\ZF+\DC$.
\end{proposition}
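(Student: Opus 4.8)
\section*{Proof proposal}

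The plan is to run Baumgartner's standard construction of a model of $\PFA$ from a supercompact cardinal, checking at each step that only $\DC$ (together with the large cardinal) is used, and replacing the usual Laver-function bookkeeping by the lottery sums, whose whole point is to avoid the choice needed to build a Laver function in the absence of a well-ordering of $V_\kappa$. First I would record the soft facts. A lottery sum of proper forcings is proper: below a condition $(i,q)$ lying in a countable $M\prec H(\lambda)$ the forcing is just the $i$-th component below $q$, so an $M$-generic condition for that component works for the sum. Hence each stage forcing $\dot\QQ_\alpha$ is proper, and by the preservation theorem for countable support iterations of proper forcings the whole iteration $\PP_\kappa$ is proper; by \autoref{thm:proper-preserves-dc} it preserves $\DC$, so $V[G]\models\ZF+\DC$, and properness keeps $\omega_1$ uncountable exactly as in the proof that $\Col(\omega,\omega_1)$ is not proper. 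One also checks, as usual for an iteration of length an inaccessible $\kappa$ with $\PP_\alpha$ of rank below $\kappa$ at every stage, that $\kappa$ is preserved and becomes $\omega_2^{V[G]}$.

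Now for the heart of the matter. Suppose towards a contradiction that $\PFA$ fails in $V[G]$, and let $\QQ\in V[G]$ be a proper forcing together with a family $\cD=\{D_\xi\mid\xi<\omega_1\}$ of dense open sets admitting no generic filter, chosen of minimal possible (von Neumann) rank; this minimum exists, and the class of all counterexamples of that rank is a \emph{set}, which is precisely what makes the stage forcings sets. Fix $\alpha$ large enough that $\QQ$, $\cD$, and a $\PP_\kappa$-name for them all lie in $V_\alpha$, and invoke \autoref{def-supercompact} to obtain $\beta>\alpha$ and an elementary $j\colon V_\beta\to N$ with $\crit j=\kappa$, $N^{V_\alpha}\subseteq N$, and $\alpha<j(\kappa)$. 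Since $\crit j=\kappa$ we have $j\restriction V_\kappa=\id$, so the first $\kappa$ stages of $j(\PP_\kappa)$ (as defined in $N$) coincide with $\PP_\kappa$ and $j$ fixes every condition of $\PP_\kappa$ pointwise; thus $G$ is already $N$-generic for those stages and $j''G\subseteq G$ automatically.

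The key local point is that the stage-$\kappa$ forcing of $j(\PP_\kappa)$, computed in $N[G]$, is the lottery sum of the minimal-rank counterexamples to $\PFA$ of $N[G]$, and that $\QQ$ is one of its components. This is where the closure $N^{V_\alpha}\subseteq N$ is used: it guarantees that $N[G]$ and $V[G]$ compute the same $H(\lambda)$ for the relevant $\lambda$, hence agree on which small forcings are proper, on which families are counterexamples (a witnessing filter is a small set, so it exists on one side iff on the other), and therefore on the value of the minimal rank. Consequently ``select the $\QQ$-coordinate'' is a genuine condition in that lottery. I would then force over $V[G]$ with the quotient $j(\PP_\kappa)/G$ below this condition to obtain a $\QQ$-generic filter $H$ at stage $\kappa$ followed by a generic $G_{\mathrm{tail}}$ for the remainder; any such generic over $V[G]$ is in particular generic over $N[G][H]$, so $G_{j(\kappa)}:=G\ast H\ast G_{\mathrm{tail}}$ is $N$-generic and $j$ lifts to an elementary $j\colon V_\beta[G]\to N[G_{j(\kappa)}]$. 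Since $\crit j=\kappa>\omega_1$ we have $j(\omega_1)=\omega_1$ and $j(\cD)=\langle j(D_\xi)\mid\xi<\omega_1\rangle$; the filter generated by $j''H$ on $j(\QQ)$ meets each $j(D_\xi)$, because any $d\in H\cap D_\xi$ yields $j(d)\in j(D_\xi)$ inside that filter. Hence $N[G_{j(\kappa)}]$ satisfies ``there is a filter on $j(\QQ)$ meeting every member of $j(\cD)$'', and by elementarity $V_\beta[G]$ satisfies ``there is a filter on $\QQ$ meeting every member of $\cD$''. As the witnessing filter is an element of $V_\beta[G]\subseteq V[G]$, it lies in $V[G]$, contradicting the choice of $\QQ,\cD$. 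Therefore $V[G]\models\PFA$.

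The step I expect to be the main obstacle is the choiceless handling of the lifting argument: verifying that properness, being a counterexample to $\PFA$, and the value of the minimal rank are genuinely absolute between $N[G]$ and $V[G]$ --- all of which rest on $\DC$ holding in both models and on $N^{V_\alpha}\subseteq N$ transferring enough of $H(\lambda)$ --- and constructing $G_{\mathrm{tail}}$ without appealing to a well-ordering of $N$. The latter is finessed by building the tail generic in an outer forcing extension of $V[G]$ rather than inside $V[G]$: this costs nothing, because the final conclusion is extracted by elementarity as a statement about the set $\QQ\in V_\beta[G]$, whose witness necessarily lands back inside $V[G]$. The remaining work is bookkeeping-free precisely because the minimal-rank lottery together with the reflection provided by $j$ automatically places the offending counterexample at stage $\kappa$, so no Laver function --- and hence no choice to construct one --- is ever needed.
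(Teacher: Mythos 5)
Your proposal is correct and follows essentially the same route as the paper's own (sketched) argument: the lottery-sum iteration replaces the Laver function, a supercompactness embedding $j\colon V_\beta\to N$ reflects the minimal-rank counterexample into the stage-$\kappa$ lottery of $j(\PP_\kappa)$, and the lifted embedding pushes the generic filter for $\QQ$ forward to contradict minimality by elementarity. The only cosmetic difference is that the paper phrases the contradiction in terms of what a condition $p'$ forces over $N$ (and makes explicit that $j``V_\alpha\in N[H]$ so that $j``I$ is available inside $N[H]$), whereas you argue semantically with the lifted embedding; both come to the same thing.
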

This approach to the consistency of $\PFA$ is due to Philipp Schlicht and is unpublished. It appears in Julian Sch\"older's masters thesis, \cite{Scholder}, where it is also extended. For the convenience of the reader, we sketch below the idea behind the proof.
\begin{proof}
We define an iteration $\PP_\alpha$ for $\alpha\leq\kappa$, which is a countable support iteration, and in which the $\alpha$th iterand is the lottery sum of all names of counterexamples to $\PFA$ of minimal rank ${<}\kappa$ (if these exist; otherwise it is the trivial forcing).

If $\PFA$ does not hold in the generic extension, let $\QQ$ be a rank-minimal counterexample and $\cD=\{D_\alpha\mid\alpha<\omega_1\}$ be a collection of dense open sets witnessing that $\PFA$ fails. Let $\dot\QQ$ and $\dot\cD$ be names for $\QQ$ and $\cD$, respectively, and let $p\in\PP_\kappa$ force that $\dot\QQ$ and $\dot\cD$ form a counterexample to $\PFA$ of minimal rank. Choose $\eta$ to be a large enough ordinal such that the above is true in $V_\eta$ and there is an elementary embedding $j\colon V_\eta\to N$ with critical point $\kappa$ and such that $j(\kappa)$ is above the rank of $\dot\QQ$ and $\dot\cD$, where $N$ is transitive and sufficiently closed. In particular we may assume that $\dot\QQ$, $\dot\cD\in N$. 

By the definition of our iteration and the closure of $N$, we may extend $p$, in $j(\PP_\kappa)$, to a condition $p'$ which, at stage $\kappa$, selects $\dot\QQ$ among the choices of the lottery at that stage. Let $H$ be a $j(\QQ)$-generic filter over $N$ such that $p'\in H$ and let $G=H\cap\PP_\kappa$. By a standard lifting argument, $j\colon V_\eta\to N$ lifts to an elementary embedding from $V_\eta[G]$ to $N[H]$, which we will also denote by $j$. Let $I$ be the $\dot\QQ^G$-generic filter over $V$ given by $H$. We may assume that $N[H]$ contains $j``V_\alpha$ for some $\alpha$ containing all relevant names of conditions in $\dot\QQ$, and hence that it contains $I':=j``I$. But the interpretation of $I'$ by $H$ generates a filter of $j(\dot\QQ)^H$ meeting all members of $j(\dot\cD)^H$. It follows that $p'$ does not force, in $j(\PP_\kappa)$ and over $N$, that $j(\dot\QQ)$ and $j(\dot\cD)$ form a counterexample to $\PFA$, and therefore neither does $p$. But then, by elementarity of $j$, $p$ does not force, in $\PP_\kappa$ and over $V_\eta$, that $\dot\QQ$ and $\dot\cD$ form a counterexample to $\PFA$, which is a contradiction.
\end{proof}

\section{Does PFA imply the Axiom of Choice?}

Probably not. In fact, if $\delta<\kappa$ are such that $\delta$ is supercompact and $\kappa$ is a Reinhardt cardinal, then forcing $\PFA$ via a forcing $\PP\subseteq V_\delta$ as in \autoref{cons-PFA} will preserve the Reinhardtness of $\kappa$.

We would have liked, of course, to produce a model of $\PFA$ without the Axiom of Choice starting from a more reasonable assumption (in the region of supercompact cardinals). However, this seems to be a much more difficult task than one would originally expect it to be. We believe that outlining this difficulty is constructive, and raises some interesting questions about the absoluteness of $\PFA$ even between models of $\ZFC$.

The ``obvious construction'' would be to start with a model of $\ZFC+\PFA$, and then construct a symmetric extension `\`a la the first model Cohen' (see Chapter~5 in \cite{Jech:AC1973}), using $\Add(\omega_2,\omega_2)$ as our forcing,\footnote{In this forcing conditions are partial functions $p\colon\omega_2\times\omega_2\to 2$ whose domain is of size at most $\aleph_1$, ordered by reverse inclusion.} thus obtaining an intermediate model which is closed under $\omega_1$-sequences. Using the K\"onig--Yoshinobu Theorem from \cite{KonigYoshinobu:2004}, the full generic extension must satisfy $\PFA$, so the intermediate model \textit{should too}.

However, when getting down to brass tacks, one sees that the question is this: If $\PP$ is a proper forcing, is $\PP$ also proper in a larger model which agrees on the same $\omega_1$-sequences?

This leads us to a very interesting question, which even in the context of $\ZFC$ has no obvious answer:

\begin{question}
Suppose that $\PP$ is an $\omega_2$-directed-closed forcing and $1\forces\PFA$. Does that mean that $\PFA$ holds in the ground model?
\end{question}

Of course, to make our example, we would need to argue with a proof in a model of $\ZF+\DC_{\omega_1}$, rather than a model of $\ZFC$. But we are optimistic that a proof in $\ZFC$ will also work---mutatis mutandis---in $\ZF+\DC_{\omega_1}$.

When approaching the above question, it seems that an easy way to solve it would be to prove the following statement: if $\QQ$ is an $\omega_2$-directed-closed forcing and $\PP$ is proper, then $1_\QQ\forces_\QQ``\check\PP$ is proper''. Yasuo Yoshinobu proved that even in $\ZFC$, this question admits a negative answer.

\begin{theorem}[Yoshinobu \cite{Yoshinobu:2019}; $\ZFC$]
Given a regular cardinal $\kappa>\omega$, there is a forcing of the form $\Add(\omega,1)\ast\Col(\omega_1,2^\kappa)\ast\dot\QQ$, where $\dot\QQ$ is a ccc forcing, which collapses $\omega_1$ after forcing with $\Add(\kappa,1)$. 
\end{theorem}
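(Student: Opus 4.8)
The plan is to verify two things: that $R=\Add(\omega,1)\ast\Col(\omega_1,2^\kappa)\ast\dot\QQ$ is proper in $V$, and that after first forcing with $\Add(\kappa,1)$ the reinterpreted $R$ collapses $\omega_1$. The first is the routine half. Since $\Add(\omega,1)$ is ccc and hence proper, $\Col(\omega_1,2^\kappa)$ is $\sigma$-closed and hence proper by \autoref{prop:sigma-closed-implies-proper}, and $\dot\QQ$ is forced to be ccc and hence proper, the preservation of properness under finite iterations guarantees that $R$ is proper. As proper forcing preserves $\omega_1$, once we exhibit the collapse over $V[g]$ we will have shown that $\Add(\kappa,1)$ does not force $\check R$ to be proper, which is the point (taking $\kappa\ge\omega_2$ makes $\Add(\kappa,1)$ even $\omega_2$-directed-closed).

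First I would record the features of the first two steps over $V[g]$, where $g\subseteq\kappa$ is the $\Add(\kappa,1)$-generic. As $\Add(\kappa,1)$ is ${<}\kappa$-closed it adds no sequences of length ${<}\kappa$; in particular every proper initial segment $g\restriction\alpha$ (for $\alpha<\kappa$) already lies in $V$, while $g$ itself does not, and no new reals appear. The step $\Col(\omega_1,2^\kappa)$ collapses $(2^\kappa)^V$—hence $P(\kappa)^V$ and $\kappa$ itself—to size $\aleph_1$, adds no reals, and, being $\sigma$-closed, leaves $\cf(\kappa)=\omega_1$; let $e\colon\omega_1\to P(\kappa)^V$ be the generic surjection and let $r$ be the Cohen real. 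The crucial asymmetry is now visible: in $V[g][r][h]$ the ordinals $\xi_\alpha=\min\{\xi\mid e(\xi)=g\restriction\alpha\}$ are well-defined for every $\alpha<\kappa$, and since uncountably many of the $g\restriction\alpha$ are pairwise distinct whereas any countable collection of them would be confined to a countable piece of the enumeration, the set $\{\xi_\alpha\mid\alpha<\kappa\}$ is cofinal in $\omega_1$. This $g$-definable cofinal object, organized by end-extension into a branch, is precisely what is absent from the $g$-free model $V[r][h]$.

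The role of $\dot\QQ$ is to convert this branch into a genuine collapse while remaining ccc over $V[r][h]$. The natural candidate is the finite specialization forcing $Q(T)$ of an $\omega_1$-Aronszajn tree $T$ built inside $V[r][h]$ from $r$ and $e$, whose branches through level $\xi$ correspond (via $e$ and a fixed continuous cofinal sequence $\langle\kappa_\xi\mid\xi<\omega_1\rangle$ in $\kappa$) to end-extending initial segments of subsets of $\kappa$. If $T$ has \emph{no} cofinal branch in $V[r][h]$, then by the classical $\Delta$-system argument $Q(T)$ is ccc there, so $\dot\QQ$ is indeed forced to be ccc and $R$ is proper in $V$; if, on the other hand, the fresh set $g$ threads $T$, producing a cofinal branch $b$ of order type $\omega_1$ in $V[g][r][h]$, then the generic specialization restricted to $b$ is an injection of $b$ into $\omega$, forcing $\otp(b)=\omega_1$ to be countable and thereby collapsing $\omega_1$ over $V[g][r][h]$, exactly as required.

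The heart of the matter, and the step I expect to be hardest, is the construction of $T$ reconciling these two antagonistic demands. Placing \emph{all} ground-model initial segments at each level makes $g$ thread $T$, but then every $d\in P(\kappa)^V$ threads it as well, so $T$ acquires cofinal branches already in $V$; conversely, thinning the levels to countable size so as to kill every branch in $V[r][h]$—which is what the ccc specialization argument needs—fights directly against catching a \emph{generic} subset of $\kappa$, since a countable level is ``meagre'' from the point of view of $\Add(\kappa,1)$. Engineering $T$ so that it is Aronszajn in $V[r][h]$ yet threaded by every $\Add(\kappa,1)$-generic is the delicate core of Yoshinobu's argument, and it is here that the Cohen real $r$ must be deployed to thin $T$ in a manner invisible to $V$ but visible to a generic subset of $\kappa$. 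Once such a $T$ is in hand, both conclusions—ccc in $V[r][h]$ and collapse of $\omega_1$ in $V[g][r][h]$—follow from the two faces of the specialization forcing.
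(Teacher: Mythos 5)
The paper does not actually prove this theorem: it is quoted verbatim from Yoshinobu's paper \cite{Yoshinobu:2019} as a black box, so there is no in-paper argument to measure your proposal against. Judged on its own terms, your write-up is a strategy outline rather than a proof, and the gap is exactly where you say it is. The first half (properness of $\Add(\omega,1)\ast\Col(\omega_1,2^\kappa)\ast\dot\QQ$ over $V$, via ccc and $\sigma$-closed implying proper and closure of properness under two-step iteration, plus ``proper forcings preserve $\omega_1$'' to convert a collapse into a failure of properness) is routine and correct. The choice of mechanism for $\dot\QQ$ --- a specialization forcing $Q(T)$ for a tree $T\in V[r][h]$ that is Aronszajn there but acquires a cofinal branch from the $\Add(\kappa,1)$-generic $g$ --- is the canonical way to make a single poset ccc in one model and $\omega_1$-collapsing in a larger one. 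But the existence of such a $T$ \emph{is} the theorem: everything else is boilerplate. Your own analysis makes the difficulty vivid (fat levels put branches of $T$ already in $V$, hence in $V[r][h]$; countable levels are nowhere dense from the point of view of $\Add(\kappa,1)$ and so cannot catch a generic), and you stop precisely at the point where the Cohen real and the collapse enumeration $e$ would have to be combined to resolve that tension. No candidate definition of $T$ is offered, no verification that $T$ is Aronszajn in $V[r][h]$, and no verification that $g$ threads it; you have not even ruled out that the tension is unresolvable and that Yoshinobu's $\dot\QQ$ must be something other than a specialization forcing.

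A secondary point worth flagging: the claim that ``the generic specialization restricted to $b$ is an injection of $b$ into $\omega$'' presupposes that one can still force a total specializing function over $V[g][r][h]$, where $Q(T)$ is no longer ccc; the collapse of $\omega_1$ there needs its own (short, but not free) argument, e.g.\ by observing that the relevant density requirements survive, or by arguing directly that $Q(T)$ cannot preserve $\omega_1$ once $T$ has an uncountable branch. And the observation that $\{\xi_\alpha\mid\alpha<\kappa\}$ is cofinal in $\omega_1$ does not by itself organize anything ``by end-extension into a branch'' of a tree that has not yet been defined. In short: the skeleton is plausible and the intended mechanism is correctly identified, but the delicate core --- the construction of the name $\dot\QQ$ --- is missing, and with it the proof.
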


\part{Generic Absoluteness without Choice}

\section{Motivation and limitations}\label{Motivation and limitations} 
As mathematicians, we would like to have a foundation of mathematics $T$ which, apart from having other virtues, is powerful enough that it answers all, or most of, the questions we may be naturally interested in when doing mathematics. There is widespread agreement that set theory provides a universal language to do mathematics: we imagine an entity, $V$, that we think of as `the set-theoretic universe', i.e., the collection of all pure sets,\footnote{The intuition is that  `pure sets' should be collections consisting only of sets, all of which sets should themselves have this property, and so on. When trying to make sense of this intuition, we are naturally led to the idea of pure sets being precisely the members of the set-theoretic cumulative hierarchy.} and observe that every question in mathematics seems to be translatable into a question about what holds true in the set-theoretic universe. Hence, it is just natural to try to find such a foundation of mathematics phrased in the language of set theory. To be clear, this foundation of mathematics should be a first order theory $T$ in the language of set theory which, besides having the desired virtues---e.g., we would like to have some evidence that the description of $V$ given by $T$ is in fact consistent---is such that we can prove, for a variety of natural mathematical statements $\sigma$ we may potentially be interested in, that every model of $T$ is a model of $\sigma$ (or that every model of $T$ is a model of $\lnot\sigma$). 

The standard axiomatic system for set theory, $\ZFC$, has the virtues\footnote{About which we will not say anything here.} one would expect from a satisfactory foundation of mathematics. However, and since $\ZFC$ is computable and interprets Peano arithmetic, G\"{o}del's incompleteness theorem tells us that this theory is incomplete if it is consistent, and that in fact there are arithmetical statements that the theory does not decide. Moreover, any consistent strengthening of $\ZFC$ obtained by adding finitely many axioms to it will be affected by the incompleteness phenomenon in exactly the same way. 

Large cardinal axioms are a family of `natural' principles, intending to supplement $\ZFC$ and asserting the existence of some very high cardinals $\kappa$ with strong properties of some sort.\footnote{See \cite{Kanamori} for a lot of historical and mathematical information on these cardinals.} They tend to be phrased as reflection principles for $V$, or as principles which one can prove\footnote{Over conservative second order extensions of $\ZFC$.} to be equivalent to second order statements about $V$ asserting the existence of elementary embedding, different from the identity, from $V$ into some submodel $M$, and where $M$ has strong closure properties within $V$. These axioms tend to be linearly ordered, in the sense that for any two such axioms $\psi_0$ and $\psi_1$, it is usually the case that one of the corresponding theories, say $\ZFC+\psi_1$, proves the consistency of $\ZFC+\psi_0$, and in fact the existence of a rank-initial segment of the universe satisfying $\ZFC+\psi_0$, or vice versa.\footnote{On the other hand, by G\"{o}del's second incompleteness theorem, none of these theories proves its own consistency, provided they are in fact consistent.} Thus, these axioms form a hierarchy of more and more daring principles, stretching from just $\ZFC$, all the way up to inconsistency.\footnote{A natural closure point of this hierarchy, namely the existence of a nontrivial elementary from $V$ into $V$ itself, was in fact proposed by Reinhardt in the late 1960's, and proved, a few years later, to be inconsistent by Kunen (\cite{Kunen}).} Historically, the motivation behind large cardinal axioms has come from a variety of sources. In any case, a general underlying motivation for these axioms is the intuition that the set-theoretic universe ought to be very large. We should point out that here we are dealing with a truly open-ended family of principles.\footnote{This is in accord with their spirit: every notion of `largeness' should be strictly subsumed by an even stronger such notion.} In particular, there is no `official' definition of large cardinal axiom, but set-theorists surely recognize such axioms when they meet them. 

It seems to be an empirical fact that every consistent theory $T$ occurring in mathematical practice can be interpreted in $\ZFC+\psi$ for some large cardinal axiom $\psi$, where this is understood  at least in the weak sense of there being a large cardinal axiom $\psi$ such that $\ZFC+\psi$ proves the consistency of $T$.\footnote{This is of course not a mathematical assertion as it refers to mathematical practice and as we do not have any precise definition of `large cardinal axiom' (see above).} Moreover, given that large cardinal axioms tend to be linearly ordered in the way we mentioned, the sets of $\Pi^0_1$ sentences implied by consistent large cardinal axioms (tend to) form a linearly ordered sequence, under inclusion, of sets of true $\Pi^0_1$ sentences. These two points together suggest the appealing picture of an open-ended increasing sequence of $\Pi^0_1$-theories of arithmetic, each one of them characterized as the set of $\Pi^0_1$ theorems of $\ZFC$ together with some consistent large cardinal axiom, and whose union is the true $\Pi^0_1$-theory of $\mathbb N$. The suggestion that the $\Pi^0_1$-theory of $\mathbb N$ is actually given by the union of these $\Pi^0_1$-theories is justified by the contention that every true $\Pi^0_1$ sentence ought to be implied by some consistent reasonable theory $T$ possibly occurring in mathematical practice,\footnote{By `reasonable theory possibly occurring in mathematical practice' we mean a mathematical theory, compatible with general set-theoretic principles, and which mathematicians could possibly come up with.} together with the empirical fact, mentioned above, that if $\sigma$ is a $\Pi^0_1$ theorem of such a theory $T$, then there is a large cardinal axiom $\psi$ with the property that every model $M$ of $\ZFC+\psi$ contains a model $N$ of $T$. $N$ need not be an $\omega$-model from the point of view of $M$, but the fact that $\sigma$ holds in $N$ entails that it also holds in $M$. We could in fact use this picture as a guiding principle when thinking about the upper reaches of the large cardinal hierarchy.\footnote{Of course, the actual dividing line between those large cardinal axioms which are consistent and those which are not will always remain a sort of epistemological blindspot.} 

The bad news is that there are very natural mathematical questions beyond $\Pi^0_1$-arithmetic, the answer to which is independent from any genuine large cardinal axiom. The most prominent such question, already asked by Cantor, is perhaps the question whether $\CH$, i.e., $2^{\aleph_0}=\aleph_1$, is true or, more generally, the Continuum Problem, i.e., the question ``How many real numbers are there''?\footnote{Unlike $\CH$, all known independent arithmetical statements we have been referring to in the previous paragraph---G\"{o}del-style self-referential sentences, consistency statements, or for example the Paris--Harrington theorem---are either meta-mathematical statements about theories or, ultimately, artifacts designed to prove the incompleteness of some base theory.} Using Cohen's method of forcing one can prove that it is consistent, relative to $\ZFC$, to assume that $2^{\aleph_0}$ takes the value $\aleph_1$, or $\aleph_2$, or $\aleph_{\omega_1}$, or indeed $\aleph_\alpha$ for many other choices of $\alpha$. In fact, the same is true if we replace $\ZFC$ with $\ZFC$ together with any large cardinal axiom: one can, for example, force both $2^{\aleph_0}=\aleph_1$ and $2^{\aleph_0}=\aleph_2$ by a rather small forcing $\PP$, and all large cardinals there might be in the universe with retain their large cardinal properties in the  extension by $\PP$ (as was shown by Azriel Levy and Robert Solovay in \cite{LevySolovay}). 

Set-forcing or, more generally, class-forcing, is indeed an extremely powerful method for, starting with some model $M$ of (sufficiently many) axioms of $\ZFC$, producing another model of set theory satisfying some interesting statement (see \cite{Shelah:PIF}). This is effected by building a generic extension $M[G]\supseteq M$ of $M$. The other main method we have for producing models of set theory is to construct carefully chosen submodels of the universe (inner model theory). In fact, as things stand at present, the forcing method and the construction of inner models of set theory are essentially the only methods we have for proving independence results, over base set theories---of the form $\ZFC$ + $\psi$ for some large cardinal axiom $\psi$---about the infinite.\footnote{On the other hand, and as already mentioned, there is certainly independence in arithmetic (e.g.\  G\"{o}del sentences), about which forcing has nothing to say, at least directly. However, and as we already pointed out, at least $\Pi^0_1$ statements seem to be decided, in a coherent way, by suitable large cardinal axioms. And in any case arithmetic is after all the mathematical study of the finite.} It is therefore natural, from an empiricist standpoint, to seek to distil mathematical truth by isolating axioms---beyond $\ZFC$---which make reasonable portions of the truth predicate of the universe immune, to some extent, to the method of forcing. $V=L$ is just such an axiom, and in fact it completely rules out any intromission whatsoever of the method of forcing (this includes generic extensions via class-forcing); the reason being, of course, that no non-trivial forcing extension of $L$ will satisfy $V=L$. But $V=L$ is not regarded as a good axiom by most set-theorists, as it is a minimality hypothesis incompatible with most large cardinal axioms (e.g., if $V=L$, then there are no measurable cardinals). In fact, it is usually the case, for typical (canonical) inner models $M$, that the view that $V=M$ is seen to impose undesirable minimality constraints on the universe---most notably incompatibility with large cardinal axioms---that disqualify the answers to mathematical questions derived from $V=M$ as the true answers. This in effect leaves the construction of forcing extensions as the only method we feel we should worry about when it comes to deciding on the truth value---in the context of $\ZFC$ enhanced with large cardinal axioms---of statements about the infinite.\footnote{There is in principle another way to use the method of forcing to prove independence, namely the construction of symmetric submodels of forcing extensions. However, this move is not available when the base theory relative to which we are proving independence is $\ZFC$ (see the beginning of \autoref{concluding} for an elaboration on this).}

Arguably, the picture presented in the previous paragraph could change quite dramatically if Woodin's Ultimate-$L$ project eventually succeeds (see \cite{Woodin:Midrasha}). This project aims at constructing a canonical inner model $M$ for a supercompact cardinal, and such an inner model would in fact absorb most `classical' large cardinals there might exist in $V$ above the supercompact cardinal of $M$. If this inner model can in fact be constructed, then the axiom $V=\UltimateL$ would be compatible with at least most large cardinal hypotheses, and it would be immune to the method of forcing in the same way that the axiom $V=L$ is. We should point out that we already have axioms which are compatible with all consistent large cardinal axioms and which, when added to $\ZFC$, also completely neutralize the method of set-forcing. The assertion that there is a proper class of supercompact cardinals which are indestructible under directed closed forcing is one such axiom.\footnote{This was pointed out by Gunter Fuchs and Joel D.\ Hamkins.} Let us call this axiom $\A$. In fact, if $\kappa$ is a supercompact cardinal and $\mathbb P$ is a forcing notion of size less than $\kappa$, then $\kappa$ is not indestructible by ${<}\kappa$-directed closed forcing after forcing with $\PP$ (\cite{Hamkins}).\footnote{$\A$ does not neutralize class-forcing, though.} In any case, $V=\UltimateL$ would be incomparably more effective than an axiom like $\A$. Indeed, $V=\UltimateL$ would effectively answer many questions about the set-theoretic universe (e.g., it would imply that $2^{\aleph_0}=\aleph_1$ and in fact $\diamondsuit_{\omega_1}$ holds, etc.), whereas $\A$ has nothing to say about these facts: if there is a proper class of supercompact cardinals, then one can construct a class-sized partial order forcing $\A$ together with $\CH$, or with $2^{\aleph_0}=\aleph_2$, etc. Nevertheless, and however things turn out with $V=\UltimateL$, one can naturally object to this axiom, and of course also to axioms like $\A$, on the ground that they only neutralize set-forcing for vacuous reasons; namely, because the axiom fails in any non-trivial generic extension by set-forcing. That was of course also the case for the axiom $V=L$. We are not impressed with a strategy for arguing for the truth of some statement if this strategy consists in dogmatically banning any methods to show the independence of the statement, especially if these methods are compatible with the standard views about $V$ (e.g., maximality, etc.). 

The above methodological considerations justify the search for principles, $\psi$, which, when added to our standard foundation with large cardinals $T$, decide some significant fragment of the truth predicate of $V$ not in first order logic, but in the stronger logic naturally associated to the forcing method. In other words, we want $\psi$ to imply,\footnote{In first order logic.} for every member $\sigma$ of some reasonable class $\Sigma$ of sentences, that every forcing extension $M[G]$ of every model $M$ of $T$ satisfies $\sigma$ (or that every forcing extensions $M[G]$ of every model $M$ of $T$ satisfies $\lnot\sigma$). We then say that $\psi$ proves \emph{generic absoluteness} for $\Sigma$. Such principles $\psi$ would be natural axioms in that they would provide an effectively complete picture of $V$, at least for sentences in $\Sigma$, in the sense of this picture being complete with respect to, essentially, our only method for proving independence over $T$. Furthermore, it would be desirable that these principles do not render our strong logic degenerate, i.e., it would be desirable to have that $\psi$ holds in many generic extensions $M[G]$ of models $M$ of $T$.  

Fortunately we do have axioms which are compatible with all consistent large cardinal hypotheses, which imply some substantial amount of generic absoluteness, and which furthermore do so for interesting (non-dogmatic) reasons. In fact, as Woodin proved (\cite{Woodin:PNAS1988}), large cardinals themselves do exactly that; for example, the existence, in $\ZFC$, of a proper class of Woodin cardinals has exactly this effect, as it renders the theory of the inner model  $L(\mathbb R)$---and in fact of the Chang model (see \autoref{chang-model})---invariant under generic extensions by set-forcing, and as of course the axiom survives any set-forcing extension. Hence, large cardinals not only seem to decide first order arithmetic, in first order logic. They are in fact proved to also decide second order arithmetic, and actually the theory of the Chang model, in the logic of forcibility (stronger than first order logic but the next natural logic to give our attention to given our available methods). Moreover, this theory is already completely decided by the axiom asserting the existence of a proper class of Woodin cardinals.  This fact provides a powerful additional argument in favour, in the context of $\ZFC$, of large cardinal axioms, at least in the region of a proper class of Woodin cardinals.    

What we have said so far should be enough to motivate those subscribing to the universist approach, namely the point of view that there is one real universe, which our axioms should aim at describing. But generic absoluteness is important for those subscribing to the pluralist or the multiversist approaches as well. In this type of approaches, one shifts the focus of attention from the notion of one unique universe of set theory to a plurality of competing universes of set theory, naturally generated by applying (some of) our methods for proving independence over a standard foundation. The independence phenomenon has made the actual set-theoretic universe into an elusive, or inaccessible, entity. Hence, set-theorists have been effectively forced to familiarizing themselves with the corresponding multiverse(s), which after all are the object they can access. Given the prevalence of the forcing method in set theory, the \emph{generic multiverse}, i.e., the collection of universes of set theory generated from a given universe by iterating the operations of taking set-generic extensions and ground models, has been the main object of attention in multiverse set theory. It is a remarkable fact about the generic multiverse that if we start with a model $M$ of $\ZFC$ and build a set-forcing extension $M[G]$ of it, the ground model $M$ is definable in $M[G]$ from some parameter. In fact, we can uniformly define all the ground models, from parameters, and in fact all the ground models of all generic extensions. Hence, we can simulate talk about the generic multiverse within any of its members $M$; for example, given any $n$, we can define $\Sigma_n$-truth in the generic multiverse, i.e., the $\Sigma_n$-sentences which are true in all members of the generic multiverse, inside $M$ (see \cite{Woodin:Multiverse} for more details). 
  
The generic multiverse both provides a concrete mathematical arena to debate foundational views in mathematics\footnote{For example, a view as the one expressed by Feferman in \cite{Feferman}, that a statement like the Continuum Hypothesis does not have a definite truth, can be naturally argued for using the language provided by the generic multiverse; in this case, the argument would, at least in part, appeal to the fact that neither $\CH$ nor its negation are true across the generic multiverse truth, i.e., each of $\CH$ and $\lnot\CH$ is true in some members of the generic multiverse and false in others.} and, because of its foundational value, it has also become a mathematical object of interest. To cite an example of mathematical work done on the generic multiverse with an eye on foundations, Hamkins and L\"owe have studied the modal logic of forcing, where this is understood as the modal logic corresponding to looking at the generic multiverse as a Kripke frame where the accessibility relation is given by `$N$ is a set-generic extension of $M$' (see \cite{HamkinsLoewe:ForcingModalLogic}).

Using the generic multiverse language, the generic absoluteness phenomenon can be used to provide a bridge between various possible positions in the universist vs.\ multiversist debate in the foundations of set theory. The first illustration of this that comes to mind is the following: As we mentioned, the presence, in some members of the the generic multiverse (equivalently, in all of its members), of a proper class of sufficiently strong large cardinals (e.g.\ Woodin cardinals) implies that the truth of the Chang model is invariant across the generic multiverse, whereas this is not the case for $\CH$. This suggests seeing the theory of the Chang model as `more definite' or `truer' than the theory of, say, $H(\omega_2)$ (where $\CH$ lives). That in turn suggests seeing the Chang model as `more real' than $H(\omega_2)$. According to such a view, reality about the set-theoretic universe would come in degrees, with a fragment of the universe being more real than another if there is less variation of the theory of the former, across the generic multiverse, than of the theory of the latter.\footnote{The perception that $H(\omega_1)$ is `more real than' $H(\omega_2)$ seems in fact to be widespread among mathematicians working outside of set theory.}
 
It is now a good time to define Chang models in general.

\begin{definition}\label{chang-model}
Let $\kappa$ be an ordinal. The \textit{$\kappa$-Chang model}, denoted by $\cC_\kappa$, is the $\subseteq$-minimal model of $\ZF$ containing all the ordinals and closed under $\kappa$-sequences. This model can always be constructed as $L(\power_{\kappa^+}(\Ord))$. In the case where $\kappa=\omega$, we omit $\omega$ from the terminology and notation, and refer to it as the Chang model.
\end{definition}

In the following section we will prove that under suitable large cardinal assumptions, the theory of the Chang model is generically absolute under forcings which preserve $\DC$. In other words, assuming sufficiently strong large cardinals exist, if $\PP$ is a forcing which preserves $\DC$, e.g.\ a proper forcing, and $G\subseteq\PP$ is a $V$-generic filter, then the theory of the Chang model in $V$ and $V[G]$ is the same, even if we add reals as parameters. The foundational motivation for this work is the following: As has already been mentioned, most of real analysis lives inside $L(\RR)$, and therefore in the Chang model,\footnote{Note that $L(\RR)$ is in fact a definable inner model of the Chang model.} and can be carried out in $\ZF+\DC$. This means that $\ZF+\DC$ is a very natural meta-theory for analysis. On the other hand, we have made a case for the desirability of generic absoluteness. These two points together make it natural to enquire whether or not full $\AC$ is an overkill when it comes to deriving generic absoluteness for $L(\RR)$ or even for the Chang model (modulo large cardinals), in the same way that, say, $\CH$ or $\lnot\CH$ are indeed overkills when deriving generic absoluteness for these inner models with $\ZFC$ as base theory.

It is important to realize that non-trivial instances of generic absoluteness for the Chang model may be blocked unless we restrict our considerations to forcing notions preserving $\DC$:

\begin{proposition}
The Chang model cannot have generic absoluteness for its $\Sigma_2$ theory in $\ZF$, even in the presence of large cardinals. 
\end{proposition}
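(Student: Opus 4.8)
The plan is to isolate a single $\Sigma_2$ sentence whose truth in the Chang model is not forcing-invariant, regardless of which large cardinals are present. I would take that sentence to be $\lnot\DC$. Writing $\DC$ in the tree form of its definition, $\lnot\DC$ asserts
\[
\exists T\,\bigl[\,T\text{ is a tree without maximal elements}\ \wedge\ \forall c\,(c\text{ is not a chain of }T\text{ of order type }\omega)\,\bigr],
\]
which is $\Sigma_2$ over $(\cC,\in)$: the matrix after $\exists T$ is a conjunction of a bounded formula with the $\Pi_1$ clause ranging over all potential branches. This is precisely the right level of complexity, since $\Sigma_1$ formulas (even with real parameters) are upward absolute between the transitive classes $\cC^V\subseteq\cC^{V[G]}$ and so can never be flipped from false to true; it is exactly the universal quantifier over candidate chains that pushes $\lnot\DC$ up to $\Sigma_2$.

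The first step is to record that the Chang model inherits $\DC$ from its surrounding universe. Suppose $W\models\DC$ and let $T\in\cC^W$ be a tree without maximal elements. By $\DC$ in $W$ there is a chain $\langle t_n\mid n<\omega\rangle\in W$ of order type $\omega$ with each $t_n\in T\subseteq\cC^W$; since $\cC^W$ is closed under $\omega$-sequences computed in $W$ (\autoref{chang-model}), this chain already belongs to $\cC^W$. Hence $\cC^W\models\DC$, so the sentence $\lnot\DC$ is \emph{false} in $\cC^W$ whenever $W\models\DC$.

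The second step supplies a universe over which $\DC$ can be destroyed by forcing while large cardinals survive. Invoking Monro's theorem \cite{Monro:1983} that $\DC$ is not always preserved by forcing in $\ZF$, I would fix a model $V\models\ZF+\DC$---in which $\AC$ necessarily fails, as forcing over a model of $\AC$ preserves $\AC$---carrying as many large cardinals as desired, together with $\PP\in V$ such that $V[G]\models\lnot\DC$. Applying the first step to $W=V$ gives $\cC^V\models\lnot(\lnot\DC)$. On the other hand, the failure of $\DC$ in $V[G]$ is witnessed by a tree without maximal elements and without a chain of order type $\omega$, and the crucial point is that such a witness can be taken \emph{inside} $\cC^{V[G]}$: a counterexample coded by a set of ordinals, or a short cofinal sequence into $\omega_1$, lies in the Chang model by its closure under $\omega$-sequences (which, in particular, forces $\cC^{V[G]}$ to compute $\omega_1$ correctly). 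As this tree has no $\omega$-chain in $V[G]\supseteq\cC^{V[G]}$, it has none in $\cC^{V[G]}$, so $\cC^{V[G]}\models\lnot\DC$. The same $\Sigma_2$ sentence is therefore false in $\cC^V$ and true in $\cC^{V[G]}$, refuting $\Sigma_2$ generic absoluteness for the Chang model.

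The hard part is the construction underlying the second step, and it has two aspects. First, one must ensure that Monro's model and its $\DC$-killing forcing are compatible with large cardinals: since both the choice failure and $\PP$ can be localized at a fixed low rank, while a supercompact in the sense of \autoref{def-supercompact} is witnessed by embeddings of arbitrarily high $V_\beta$, a L\'evy--Solovay style argument should show that the small forcing $\PP$ leaves the large cardinals intact. Second, one must verify that the specific $\DC$-failure produced genuinely descends to the Chang model rather than being a mere feature of the full extension $V[G]$; this is exactly where closure of $\cC^{V[G]}$ under $\omega$-sequences is used to place the witnessing sequence or set of ordinals inside the Chang model. I expect checking this descent, for Monro's particular witness, to be the main obstacle.
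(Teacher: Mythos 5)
Your overall skeleton is sound and runs parallel to the paper's: both arguments come down to exhibiting a $\DC$-preserving universe with large cardinals over which some forcing produces a failure of $\DC$ that is \emph{visible inside the Chang model}, and your observation that the most natural such visible failure is a cofinal $\omega$-sequence into $\omega_1$ is exactly the witness the paper uses (the paper's $\Sigma_2$ sentence is $\cf(\omega_1)=\omega$ rather than $\lnot\DC$, but the former implies the latter over $\ZF$, so the two choices of sentence are essentially interchangeable here). Your Step 1 and the reduction in Step 2 to ``find a counterexample tree lying in $\cC^{V[G]}$'' are both correct.

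The genuine gap is that the construction carrying all the weight --- your Step 2(a) together with the descent in 2(b) --- is not carried out, and the tools you propose for it do not suffice. Citing Monro only gives that \emph{some} $\ZF$ model admits a $\DC$-killing forcing; it gives neither compatibility with supercompact cardinals nor any control over whether the resulting failure of $\DC$ is reflected in the Chang model. Your L\'evy--Solovay remark is aimed at the wrong place: the delicate preservation issue is not the smallness of the $\DC$-killing forcing $\PP$, but the passage from a $\ZFC$ model to the choiceless ground model $V$ itself (which must be a symmetric extension, since, as you note, forcing over a $\ZFC$ model cannot kill $\AC$), and lifting supercompactness embeddings into a symmetric submodel is not a L\'evy--Solovay argument. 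The paper resolves all of this at once with a specific construction: starting from $\ZFC$ plus a supercompact, take the Feferman--Levy style symmetric collapse of $\Col(\omega_1,{<}\aleph_{\omega_1})$, which (i) preserves supercompactness in the sense of \autoref{def-supercompact} (citing Theorem~4.9 of \cite{HayutKaragila:2018} for the lifting), (ii) adds no new $\omega$-sequences of ordinals, so the Chang model is unchanged, and --- crucially --- (iii) arranges $\cf(\omega_2)=\omega_1$. Only because of (iii) does the subsequent forcing $\Col(\omega,\omega_1)$ make the new $\omega_1$ (the old $\omega_2$) singular of cofinality $\omega$, witnessed by an $\omega$-sequence of ordinals and hence inside $\cC^{V[G]}$; over a ground model with $\omega_2$ regular this collapse would preserve $\DC$ and prove nothing. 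Without an explicit arrangement of this kind, the step you flag as ``the main obstacle'' is precisely the content of the proof, not a checkable afterthought.
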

\begin{proof}
Suppose that $V$ is a model of $\ZFC$ and $\kappa$ is a supercompact cardinal (or extendible, or any other not-yet inconsistent large cardinal). Perform a Feferman--Levy style construction taking the symmetric collapse of $\Col(\omega_1,{<}\aleph_{\omega_1})$ (see Section~4 in \cite{HayutKaragila:2017} for details). 

This provides us with a model $M$ of $\ZF+\DC$ where $\kappa$ is still a supercompact cardinal in the sense of \autoref{def-supercompact},\footnote{\label{footnote-Levy-Solovay}The fact that the elementary embeddings from the ground model lift follows from Theorem~4.9 in \cite{HayutKaragila:2018}. While the proof there does nothing to prove the needed closure properties of these lifted embeddings, this is not hard to verify by hand.} and $M\models\cf(\omega_2)=\omega_1$; moreover any countable sequence of ordinals in $M$ belongs to the ground model $V$. Therefore the Chang model $\cC$ in $M$ is the same as in $V$.

Working in $M$, let $G$ be an $M$-generic filter for $\Col(\omega,\omega_1)$. Then in $M[G]$ we have that $\omega_1^{M[G]}=\omega_2^M$ and therefore $M[G]\models\cf(\omega_1)=\omega$. This fact is reflected to $\cC$ since it is witnessed by a countable sequence of ordinals, despite the fact that $\Col(\omega,\omega_1)$ has size $\aleph_1$ (without using any choice!) and thus is strictly less than $\kappa$. However, $\cf(\omega_1)=\omega$ is a $\Sigma_2$ statement over the Chang model.
\end{proof}

\begin{remark} The above proof shows of course that if $\kappa$ is, say, a supercompact cardinal in the $\ZFC$ model $V$, then there is a symmetric extension $M$ satisfying $\DC$ in which $\kappa$ remains supercompact and such that $\Sigma_2$-generic absoluteness for the Chang models fails between $M$ and the $\Col(\omega, \omega_1)$-extension of $M$ (which in the above situation kills $\DC$). Hence, there is no hope for proving $\Sigma_2$-generic absoluteness for the Chang model in general over the base theory $\ZF+\DC$ unless we restrict to forcings preserving $\DC$, e.g.\ proper forcings.\end{remark}

\begin{question}
Can generic absoluteness for $L(\RR)$ fail, in the presence of large cardinals, without the requirement that $\DC$ is preserved? Moreover, is it at all possible, assuming the existence of $\RR^\#$, that $\omega_1$ be singular in $L(\RR)$?
\end{question}

\section{Generic absoluteness for the Chang model}

Given an infinite regular cardinal $\lambda$ and a non-empty set $X$, $\Col(\lambda,{<}X)$ is the forcing, ordered by reverse inclusion, of all functions $p$ such that $|\dom p|<\lambda$, $\dom p\subseteq\lambda\times X$, and $p(\alpha,x)\in x$ for all $\alpha<\lambda$ and $x\in X$. It is straightforward to see that $\Col(\omega,{<}X)$ is weakly homogeneous and that forcing with $\Col(\omega,{<}X)$ adds surjections from $\omega$ onto all members of $X$. 

The following technical lemma, \autoref{forcing-AC}, is a rendering of \cite{Woodin:2010}, Theorem 226. This lemma will be crucially used in the proof of \autoref{gen-abs-lemma} (and of \autoref{gen-abs-lemma0}).

\begin{lemma}[Woodin; $\ZF+\DC$]\label{forcing-AC} Suppose $\kappa$ is a supercompact cardinal and $\PP\in V_\kappa$ is a forcing notion preserving $\DC$. There is then a strictly increasing sequence $\tup{\kappa_\xi\mid\xi<\kappa}$ of strongly inaccessible cardinals below $\kappa$, together with a sequence $\tup{\cQ_\xi\mid 1\leq\xi\leq\kappa}$ of forcing notions and a sequence $\tup{\dot\cQ^\PP_\xi\mid 1\leq \xi\leq\kappa}$ of $\PP$-names for forcing notions, satisfying the following conditions.
\begin{enumerate}

\item $\kappa_0$ is such that $\PP\in V_{\kappa_0}$ and such that 
\begin{itemize}
\item $\Col(\omega_1,{<}V_{\kappa_0})$ forces $\DC_{{<}\kappa_0}$ over $V_{\kappa}$, and
\item $\PP$ forces that $\Col(\omega_1,{<}V_{\kappa_0}[\dot G_\PP])$ forces $\DC_{{<}\kappa_0}$ over $V_{\kappa}[\dot G_\PP]$.
\end{itemize}
Also, 
\begin{itemize}
\item $\cQ_1=\Col(\omega_1,{<}V_{\kappa_0})$, and 
\item $\dot{\cQ}^\PP_1$ is a $\PP$-name for $\Col(\omega_1,{<}V_{\kappa_0}[\dot G_\PP])$.
\end{itemize}

\item For every $\xi>0$, letting $\gamma_\xi =\sup\{\kappa_{\xi'}\mid\xi'<\xi\}$, $\kappa_\xi$ is such that 
\begin{itemize}
\item $\cQ_\xi$ forces that $\Col(\gamma_\xi,{<}V_{\kappa_\xi}[\dot G_{\cQ_\xi}])$ forces $\DC_{{<}\kappa_\xi}$ over $V_{\kappa}$, and
\item $\PP\ast\dot{\cQ}_\xi^\PP$ forces that $\Col(\gamma_\xi,{<}V_{\kappa_\xi}[\dot G_{\PP\ast\dot{\cQ}^\PP_\xi}])$ forces $\DC_{{<}\kappa_\xi}$ over $V_{\kappa}[\dot G_\PP]$.
\end{itemize}
Also, 
\begin{itemize}
\item $\cQ_{\xi+1}=\cQ_\xi\ast\Col(\gamma_\xi,{<}V_{\kappa_\xi}[\dot G_{\cQ_\xi}])$, and
\item $\dot{\cQ}^\PP_{\xi+1}$ is a $\PP$-name for $\dot{\cQ}^\PP_\xi\ast\Col(\gamma_\xi,{<}V_{\kappa_\xi}[\dot G_{\dot{\cQ}^\PP_\xi}])$.
\end{itemize}
\item For every limit $\xi$, $\cQ_\xi$ is the Easton limit of $\tup{\cQ_{\xi'}\mid\xi'<\xi}$ and $\dot\cQ_\xi^\PP$ is a $\PP$-name for the Easton limit of $\tup{\dot\cQ_{\xi'}^\PP\mid\xi'<\xi}$.
\item For every $\xi<\kappa$, if $H\subseteq \cQ_\xi$ is $V$-generic (resp., if $H'\subseteq \cQ_\xi^\PP$ is $V[G]$-generic), then $\cQ_\kappa/H$ is $\kappa_\xi$-closed in $V[H]$ (resp., $\cQ_\kappa^\PP/H$ is $\kappa_\xi$-closed in $V[G][H']$).
\item For every $\delta<\kappa$ such that $\PP\in V_{\delta}$, if $\delta$ is supercompact in $V$ (in $V^\PP$), then $\kappa_\delta=\delta$ and $\delta$ remains supercompact after forcing with $\cQ_\kappa$ over $V$ (after forcing with $\PP\ast\dot{\cQ}^\PP_\kappa$ over $V$).
\item If $H\subseteq\cQ_\kappa$ is $V$-generic, then $V_\kappa[H]\models\ZFC$. 
\item If $G\subseteq\PP$ is $V$-generic and $H\subseteq\cQ^\PP_\kappa$ is $V[G]$-generic, then $V_\kappa[G][H]\models\ZFC$. 
\end{enumerate}
\end{lemma}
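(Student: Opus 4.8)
The plan is to build the three sequences $\langle\kappa_\xi\rangle$, $\langle\cQ_\xi\rangle$ and $\langle\dot\cQ^\PP_\xi\rangle$ simultaneously by recursion on $\xi\le\kappa$, maintaining (1)--(5) as invariants and then reading off (6) and (7) at the top stage $\xi=\kappa$. Clauses (2) and (3) already prescribe $\cQ_{\xi+1}$, $\dot\cQ^\PP_{\xi+1}$ and the Easton-limit terms verbatim once the cardinals $\kappa_\xi$ are fixed, so the whole construction reduces to \emph{choosing} the $\kappa_\xi$ correctly and then checking that the prescribed forcings behave as advertised. Throughout, $\langle\dot\cQ^\PP_\xi\rangle$ is defined so as to mirror $\langle\cQ_\xi\rangle$ inside the $\PP$-extension: each $\dot\cQ^\PP_\xi$ is the $\PP$-name obtained by running the same recursion with every $V_{\kappa_{\xi'}}$ replaced by $V_{\kappa_{\xi'}}[\dot G_\PP]$, keeping the two sides in lockstep.

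The crux is the recursion step producing $\kappa_\xi$. Having built $\cQ_\xi$ (and $\dot\cQ^\PP_\xi$) and set $\gamma_\xi=\sup\{\kappa_{\xi'}\mid\xi'<\xi\}$, I must find a strongly inaccessible $\kappa_\xi>\gamma_\xi$ (taking $\kappa_\xi=\delta$ whenever $\gamma_\xi\le\delta<\kappa$ is itself supercompact, as (5) demands) such that $\Col(\gamma_\xi,{<}V_{\kappa_\xi}[\dot G_{\cQ_\xi}])$ forces $\DC_{{<}\kappa_\xi}$ over $V_\kappa$, and likewise on the $\PP$-side. The collapse is $\gamma_\xi$-closed, hence $\sigma$-closed, hence proper (\autoref{prop:sigma-closed-implies-proper}); so by \autoref{thm:proper-preserves-dc} it preserves $\DC$, and more generally the $\DC_{{<}\gamma_\xi}$ already secured by the induction. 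The extra content of the clause --- that $\DC_{{<}\kappa_\xi}$, not merely $\DC_{{<}\gamma_\xi}$, is forced --- is what pins down how large $\kappa_\xi$ must be: the collapse well-orders $V_{\kappa_\xi}$ (it attaches a surjection from $\gamma_\xi$ onto each of its members, and $\kappa_\xi$ inaccessible gives $|V_{\kappa_\xi}|=\kappa_\xi$), the delicate point being that each $\mu$-closed counterexample tree with $\mu<\kappa_\xi$ must thereby be reduced to a well-orderable one, which then has a long chain by \autoref{lemma:WO trees}. That suitable $\kappa_\xi$ occur cofinally below $\kappa$ is a reflection argument off the supercompactness of $\kappa$: the relevant $\DC$-forcing property of $\lambda$, being (modulo a bounded amount of the parameter $\cQ_\xi$) expressible over $V_\lambda$, reflects to stationarily many strongly inaccessible $\lambda<\kappa$, and the supercompactness embeddings handle the $\PP$-side as well since $\PP\in V_\kappa$ preserves $\DC$. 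This $\DC$-forcing verification, carried out without choice --- where, by Monro's result \cite{Monro:1983}, forcing can outright destroy $\DC$ --- and kept synchronized between the $V$- and $\PP$-sides, is the main obstacle.

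With the $\kappa_\xi$ in hand, (3) defines the limit stages as Easton limits, and (4) follows from the Easton-support bookkeeping: from stage $\xi$ on, every iterand is $\gamma$-closed with $\gamma\ge\kappa_\xi$, and the support is arranged so that ${<}\kappa_\xi$-sequences of conditions in the tail have lower bounds, making the quotient $\cQ_\kappa/H$ (respectively $\cQ^\PP_\kappa/H$) $\kappa_\xi$-closed. Clause (5) is the standard lifting argument applied to the embeddings of \autoref{def-supercompact}: if $\delta<\kappa$ is supercompact then, having arranged $\kappa_\delta=\delta$, one lifts its supercompactness embeddings through $\cQ_\kappa$ (respectively through $\PP\ast\dot\cQ^\PP_\kappa$), exactly as in the symmetric-collapse computations of \cite{HayutKaragila:2018}, with the closure provided by (4) being what lets the generic for the image forcing be assembled and the lift retain its closure.

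Finally, for (6) and (7): at $\xi=\kappa$ the collapses have well-ordered $V_{\kappa_\xi}$ for cofinally many $\xi<\kappa$, so every set of $V_\kappa[H]$, having rank below $\kappa$, lies in some already well-ordered $V_{\kappa_\xi}[H\restriction\cQ_\xi]$; since the tail $\cQ_\kappa/(H\restriction\cQ_\xi)$ is $\kappa_\xi$-closed by (4) and so adds no new ${<}\kappa_\xi$-sequences, that well-ordering survives to $V_\kappa[H]$, which therefore models $\AC$ and hence $\ZFC$. The $\PP$-side, (7), is identical: $\PP$ preserves $\DC$ and the names $\dot\cQ^\PP_\xi$ were built to mirror the $V$-forcings, so the same rank-by-rank well-ordering argument gives $V_\kappa[G][H]\models\ZFC$.
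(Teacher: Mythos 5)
First, a point of comparison: the paper does not prove \autoref{forcing-AC} at all --- it is stated as a rendering of Theorem 226 of \cite{Woodin:2010} and used as a black box --- so your proposal can only be judged on its own terms, not against an in-paper argument.

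On those terms, the skeleton you describe (choose the $\kappa_\xi$ recursively; clauses (1)--(3) then dictate the iteration; read off (6) and (7) at the top) is the right one, but it is essentially forced by the statement, and the entire mathematical content of the lemma is concentrated in the step you yourself flag as ``the crux''. There your argument does not go through, for two reasons. (i) You claim $\Col(\gamma_\xi,{<}V_{\kappa_\xi}[\dot G_{\cQ_\xi}])$ forces $\DC_{{<}\kappa_\xi}$ because it well-orders $V_{\kappa_\xi}$, ``reducing each $\mu$-closed counterexample tree to a well-orderable one''. But $\DC_{{<}\kappa_\xi}$ is required over $V_\kappa$ (and over $V_\kappa[\dot G_\PP]$), and a $\mu$-closed tree in the collapse extension of $V_\kappa$ need not lie in, inject into, or be in any way controlled by the now well-orderable set $V_{\kappa_\xi}$; nothing in the sketch reduces an arbitrary such tree to a well-orderable one, so \autoref{lemma:WO trees} does not apply. (ii) You then assert that suitable $\kappa_\xi$ occur cofinally below $\kappa$ because the property ``$\Col(\gamma_\xi,{<}V_\lambda[\dot G_{\cQ_\xi}])$ forces $\DC_{{<}\lambda}$'' is expressible over $V_\lambda$ modulo bounded parameters and hence reflects. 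It is not: it quantifies over all names in $V_\kappa$ for $\mu$-closed trees, so it is a statement about $V_\kappa$ (indeed about $V$) with parameter $V_\lambda$, and reflecting such statements downward to inaccessibles below $\kappa$ is precisely what the embeddings of \autoref{def-supercompact} must be used for. Your proposal never actually deploys an embedding $j\colon V_\beta\to N$ with $N^{V_\alpha}\subseteq N$ to produce a branch of a given tree, or to locate a single $\kappa_\xi$ that works simultaneously on the $V$-side and the $\PP$-side; it asserts that this can be done. Since this is exactly the content of Woodin's theorem, the proposal amounts to a correct reverse-engineering of the bookkeeping surrounding the theorem rather than a proof of it. (The remaining clauses are comparatively routine, though for clause (5) one should also explain how $j``H$ is assembled into a master condition in the absence of $\AC$.)
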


\autoref{gen-abs-lemma} and its companion, \autoref{gen-abs-lemma0}, are the key ingredients in the proof of \autoref{gen-abs}.

\begin{lemma}[$\ZF+\DC$]\label{gen-abs-lemma}
Suppose $\delta<\kappa$ are supercompact cardinals, $\PP\in V_\delta$ is a forcing notion preserving $\DC$, and $G\subseteq\PP$ is a $V$-generic filter. Then there is, in some outer model, a $V[G]$-generic filter $K\subseteq\Col(\omega,{<}V_\delta)$, together with an elementary embedding \[j\colon\cC^{V_\kappa[G]}\to\cC^{V_\kappa[G][K]}\] \end{lemma}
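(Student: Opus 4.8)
The plan is to reduce to a choiceful setting using \autoref{forcing-AC} and then run Woodin's classical argument, realizing $j$ as the restriction of a supercompactness ultrapower embedding for $\delta$ whose image is absorbed by the collapse extension. First I would invoke \autoref{forcing-AC} for the larger supercompact $\kappa$ and for $\PP\in V_\delta\subseteq V_\kappa$, obtaining the collapse $\cQ^\PP_\kappa$ together with a $\cQ^\PP_\kappa$-generic $H$ such that $V_\kappa[G][H]\models\ZFC$ (clause (7)) and $\delta$ is still supercompact there (clause (5)). The key point is that $\cQ^\PP_\kappa$ begins with $\Col(\omega_1,{<}V_{\kappa_0})$ and has increasingly closed iterands (clause (4)), so it is $\sigma$-closed and adds no new countable sequences of ordinals; since the Chang model is $L(\power_{\omega_1}(\Ord))$ and is thus determined by $\power_{\omega_1}(\Ord)$, this yields $\cC^{V_\kappa[G]}=\cC^{V_\kappa[G][H]}$. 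Consequently $H$ is used only to manufacture the required measure and embedding (genuine objects living in $V_\kappa[G][H]$), while the Chang models named in the statement are read off from $V_\kappa[G]$ and $V_\kappa[G][K]$ directly, and elementarity can be checked in a common outer model; so no delicate commutation of $H$ with $K$ is needed.

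Working in $V_\kappa[G][H]\models\ZFC$, I would next fix a normal fine measure $\cU$ on $\power_\delta(V_\lambda)$ witnessing supercompactness of $\delta$ for some $\delta\le\lambda<\kappa$, with ultrapower embedding $i\colon V_\kappa[G][H]\to N^*\cong\Ult(V_\kappa[G][H],\cU)$. Since $\kappa$ is inaccessible and $\cU$ concentrates on $V_\lambda$ with $\lambda<\kappa$, we have $\crit(i)=\delta$, the map $i$ sends ordinals below $\kappa$ cofinally into themselves, and $N^*\subseteq V_\kappa[G][H]$ is closed under countable sequences. Restricting $i$ to Chang models gives an elementary embedding $i\colon\cC^{V_\kappa[G][H]}\to\cC^{N^*}$, i.e.\ $i\colon\cC^{V_\kappa[G]}\to\cC^{N^*}$ after the identification above. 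I would then build, in an outer model, a $\Col(\omega,{<}V_\delta)$-generic $K$ over $V[G][H]$ (hence over $V[G]$) and define $j$ as the composite $\cC^{V_\kappa[G]}\xrightarrow{\;i\;}\cC^{N^*}\hookrightarrow\cC^{V_\kappa[G][K]}$, where the inclusion makes sense because $N^*\subseteq V_\kappa[G]\subseteq V_\kappa[G][K]$, so every countable sequence of ordinals of $N^*$ already lies in $V_\kappa[G][K]$.

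The main obstacle is the final inclusion: proving that it is \emph{elementary}, i.e.\ that $\cC^{N^*}\prec\cC^{V_\kappa[G][K]}$, for then $j$ is elementary as required. This is the heart of the matter and is exactly where the weak homogeneity of $\Col(\omega,{<}V_\delta)$ together with the supercompactness of $\delta$ enter: collapsing $V_\delta$ to be countable makes the measure $\cU$ and hence the ultrapower $N^*$ ``internally visible'' to the countable sequences of ordinals of $V_\kappa[G][K]$, while homogeneity pins down the theory of $\cC^{V_\kappa[G][K]}$ over parameters from $V_\kappa[G]$. Concretely, I would verify the Tarski--Vaught criterion: given parameters in $\cC^{N^*}$ and $\cC^{V_\kappa[G][K]}\models\exists x\,\varphi$, the witness is a countable sequence of ordinals below $\kappa$ which, by the natural factorization of $\Col(\omega,{<}V_\delta)$, already appears after collapsing some $a\in V_\delta$; homogeneity of the collapse together with normality and fineness of $\cU$ then reflect such a witness down into $\cC^{N^*}$. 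I would expect this reflection step, and the careful matching of which countable sequences of ordinals appear in $N^*$ versus in $V_\kappa[G][K]$, to be the technically demanding part; everything else is bookkeeping, using that $K$ is generic over $V[G]$ and that the auxiliary $\sigma$-closed $H$ never affects the Chang models occurring in the statement.
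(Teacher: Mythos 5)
Your first step coincides with the paper's: invoke \autoref{forcing-AC} to pass to the $\ZFC$ model $V_\kappa[G][H]$ in which $\delta$ is still supercompact, and use $\sigma$-closure of $\cQ^\PP_\kappa$ to get $\cC^{V_\kappa[G]}=\cC^{V_\kappa[G][H]}$. After that you diverge, and the divergence opens a genuine gap. The paper treats Woodin's classical $\ZFC$ theorem as a black box inside $V_\kappa[G][H]$ --- producing a $V_\kappa[G][H]$-generic $J\subseteq\Col(\omega,{<}\delta)$ and an elementary embedding $\cC^{V_\kappa[G][H]}\to\cC^{V_\kappa[G][H][J]}$ --- and then spends its real effort on a Claim converting $\cC^{V_\kappa[G][H][J]}$ into $\cC^{V_\kappa[G][K]}$ for a filter $K$ generic over $V_\kappa[G]$ (using that the tail $\cQ^\PP_\kappa/H_\delta$ is $\delta$-closed, so all relevant $\Col(\omega,{<}X)$-names for $\omega$-sequences of ordinals live in $V_\kappa[G][H_\delta]$, and that $\cQ^\PP_\delta$ embeds completely into $\Col(\omega,{<}X)\cong\Col(\omega,{<}X)\times\Col(\omega,{<}X)$). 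You instead try to \emph{reconstruct} the embedding from a single supercompactness ultrapower $i\colon V_\kappa[G][H]\to N^*$ and then compose with the inclusion $\cC^{N^*}\hookrightarrow\cC^{V_\kappa[G][K]}$, which you would need to be elementary.

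That last step is not a ``technically demanding reflection argument'' to be filled in later; it is essentially the entire content of Woodin's theorem, and a single ultrapower cannot deliver it. The reals (and, more generally, the $\omega$-sequences of ordinals) of $N^*=\Ult(V_\kappa[G][H],\cU)$ are all reals of $V_\kappa[G][H]$, whereas $\cC^{V_\kappa[G][K]}$ contains the continuum-many new reals added by collapsing $V_\delta$ to be countable; an elementary inclusion of the former Chang model into the latter is precisely a generic absoluteness statement of the kind the lemma is trying to establish, so your plan is circular at its core. (Woodin's actual proof must iterate the embedding, or use a tower/direct-limit construction, exactly in order to catch up with the reals added by the collapse --- one application of supercompactness does not absorb $\Col(\omega,{<}\delta)$.) Relatedly, your remark that ``no delicate commutation of $H$ with $K$ is needed'' dismisses the step that is the paper's actual new contribution: if one does cite Woodin's theorem as intended, the generic it produces lives over $V_\kappa[G][H]$, and one must massage it into a filter generic over $V_\kappa[G]$ whose collapse extension computes the same Chang model --- that is the Claim in the paper's proof, and some version of it (or a full reproof of Woodin's theorem tailored to land in $\cC^{V_\kappa[G][K]}$) is unavoidable. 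Also, a small slip: you write $N^*\subseteq V_\kappa[G]$ to justify the inclusion of Chang models, but $N^*$ is an ultrapower of $V_\kappa[G][H]$, so only $N^*\subseteq V_\kappa[G][H]$ is immediate (this is repairable via $\cC^{V_\kappa[G][H]}=\cC^{V_\kappa[G]}$, but the main gap above is not).
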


\begin{proof}
Let $G\subseteq\PP$ be a $V$-generic filter and, in $V[G]$, let $\tup{\cQ_\xi^\PP\mid\xi\leq\kappa}$ be the iteration given by \autoref{forcing-AC}. Let $H\subseteq\cQ^\PP_\kappa$ be a $V[G]$-generic filter. By \autoref{forcing-AC} (5) and (7), $\delta$ remains supercompact in $V_\kappa[G][H]$ and $V_\kappa[G][H]\models\ZFC$. Hence, by a classical $\ZFC$ result of Woodin (see \cite{Woodin:PNAS1988}) there is, in some outer model $N$, a $V_\kappa[G][H]$-generic filter $J\subseteq\Col(\omega,{<}\delta)$, for which there is an elementary embedding \[j\colon\cC^{V_\kappa[G][H]}\to\cC^{V_\kappa[G][H][J]}.\] Let $X$ be $V_\delta$ as computed in $V$. Since $\Col(\omega,{<}X)$ and $\Col(\omega, {<}\delta)$ are forcing-equivalent in $V_\kappa[G][H]$, we may fix a $V_\kappa[G][H]$-generic filter $K_0\subseteq\Col(\omega, {<}X)$ such that $\cC^{V_\kappa[G][H][J]}=\cC^{V_\kappa[G][H][K_0]}$.

\begin{claim}
There is some $V_\kappa[G]$-generic filter $K\subseteq\Col(\omega, {<}X)$ such that \[\cC^{V_\kappa[G][H][K_0]}=\cC^{V_\kappa[G][K]}.\]
\end{claim}

\begin{proof} Let $H_\delta=H\cap\cQ_\delta^\PP$. Since $\cQ_\kappa^\PP/H_\delta$ is $\delta$-closed in $V[G][H_\delta]$ (\autoref{forcing-AC} (4) and (5)), each $\Col(\omega,{<}X)$-name  in $V_\kappa[G][H]$ for an $\omega$-sequence of ordinals is equivalent to such a name in $V_\kappa[G][H_\delta]$: every such name is equivalent to the canonical name $\dot x$ for an $\omega$-sequence where, for each $n<\omega$, $\dot x(n)$ consists of pairs $\langle p, \check \alpha\rangle$, for $p$ a condition in $\Col(\omega, {<}X)$ and $\alpha$ an ordinal, and where the set of such $p$'s forms an antichain of $\Col(\omega, {<}X)$ and is therefore of size less than $\delta$. Hence $\cC^{V_\kappa[G][H][K_0]}=\cC^{V_\kappa[G][H_\delta][K_0]}$. But $\cQ_\delta^\PP$ clearly embeds completely in $\Col(\omega, {<}X)\times\Col(\omega, {<}X)$, and therefore also in $\Col(\omega, {<}X)$ since $\Col(\omega, {<}X)\cong\Col(\omega, {<}X)\times\Col(\omega, {<}X)$, from which the conclusion follows.  
\end{proof}

The above claim finishes the proof since $\cQ_\kappa^\PP$ is $\sigma$-closed in $V[G]$, by \autoref{forcing-AC} (4), and therefore $\cC^{V_\kappa[G][H]}=\cC^{V_\kappa[G]}$. 
\end{proof}

Similarly, we can prove the following lemma.

\begin{lemma}[$\ZF+\DC$]\label{gen-abs-lemma0}
Suppose $\delta<\kappa$ are supercompact cardinals. Then there is, in some outer model, a $V$-generic $K\subseteq\Col(\omega,{<}V_\delta)$, together with an elementary embedding $$j\colon\cC^{V_\kappa}\to\cC^{V_\kappa[K]}$$
\end{lemma}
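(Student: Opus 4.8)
The plan is to mimic the proof of \autoref{gen-abs-lemma} but with the trivial forcing $\PP$ in place of the given $\PP$, thereby working directly over $V$ rather than over a generic extension $V[G]$. First I would apply \autoref{forcing-AC} in the special case where $\PP$ is the trivial forcing (equivalently, forgetting the $\PP$-superscripted sequence and using only the sequence $\tup{\cQ_\xi\mid 1\leq\xi\leq\kappa}$), obtaining the cardinals $\tup{\kappa_\xi\mid\xi<\kappa}$ and the forcings $\tup{\cQ_\xi}$. I would then fix a $V$-generic filter $H\subseteq\cQ_\kappa$. By \autoref{forcing-AC} (5) and (6), $\delta$ remains supercompact in $V_\kappa[H]$ and $V_\kappa[H]\models\ZFC$; note that this is where we use the unstarred clauses of \autoref{forcing-AC}, which are exactly the $\PP$-free analogues used in \autoref{gen-abs-lemma}.

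Next, since $V_\kappa[H]$ is a genuine model of $\ZFC$ with a supercompact $\delta$, I would invoke the classical $\ZFC$ theorem of Woodin (\cite{Woodin:PNAS1988}) to obtain, in some outer model $N$, a $V_\kappa[H]$-generic filter $J\subseteq\Col(\omega,{<}\delta)$ and an elementary embedding $j\colon\cC^{V_\kappa[H]}\to\cC^{V_\kappa[H][J]}$. Setting $X=V_\delta$ as computed in $V$, the forcing-equivalence of $\Col(\omega,{<}X)$ and $\Col(\omega,{<}\delta)$ inside $V_\kappa[H]$ lets me replace $J$ by a generic $K_0\subseteq\Col(\omega,{<}X)$ with $\cC^{V_\kappa[H][J]}=\cC^{V_\kappa[H][K_0]}$.

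The main work, exactly as before, is to descend from $V_\kappa[H]$ back to $V_\kappa$, i.e.\ to prove the analogue of the Claim: there is a $V_\kappa$-generic $K\subseteq\Col(\omega,{<}X)$ with $\cC^{V_\kappa[H][K_0]}=\cC^{V_\kappa[K]}$. For this I would set $H_\delta=H\cap\cQ_\delta$ and argue, using $\delta$-closure of $\cQ_\kappa/H_\delta$ in $V[H_\delta]$ (\autoref{forcing-AC} (4) and (5)), that every $\Col(\omega,{<}X)$-name for an $\omega$-sequence of ordinals in $V_\kappa[H]$ reduces to such a name in $V_\kappa[H_\delta]$, because the relevant antichains have size less than $\delta$; hence $\cC^{V_\kappa[H][K_0]}=\cC^{V_\kappa[H_\delta][K_0]}$. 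Then, using that $\cQ_\delta$ embeds completely into $\Col(\omega,{<}X)\times\Col(\omega,{<}X)\cong\Col(\omega,{<}X)$, I would absorb $H_\delta$ into the collapse to produce the desired single $V_\kappa$-generic $K$.

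The hard part will be verifying the name-reduction argument with sufficient care; its substance is identical to the corresponding step in \autoref{gen-abs-lemma}, and indeed the present lemma is nothing but that proof read with $\PP$ trivial. Finally, since $\cQ_\kappa$ is $\sigma$-closed over $V$ (by \autoref{forcing-AC} (4)), $H$ adds no new $\omega$-sequences of ordinals and therefore $\cC^{V_\kappa[H]}=\cC^{V_\kappa}$; combining this with $\cC^{V_\kappa[H][K_0]}=\cC^{V_\kappa[K]}$ yields the elementary embedding $j\colon\cC^{V_\kappa}\to\cC^{V_\kappa[K]}$ with $K\subseteq\Col(\omega,{<}V_\delta)$ a $V$-generic filter, as required.
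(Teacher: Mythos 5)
Your proposal is correct and is exactly the argument the paper intends: the paper gives no separate proof of this lemma, stating only that it is proved ``similarly'' to \autoref{gen-abs-lemma}, which is precisely your reading of that proof with $\PP$ trivial, using the unstarred clauses of \autoref{forcing-AC} (in particular clause (6) in place of (7)). Nothing further is needed.
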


The main theorem in this section is the following.

\begin{theorem}[$\ZF+\DC$]\label{gen-abs} Suppose for every closed and unbounded $\Pi_2$-definable class $C$ of ordinals there is a supercompact cardinal $\kappa\in C$. Then, for every set-forcing $\PP$ and every $V$-generic filter $G\subseteq\PP$, if $V[G]\models\DC$, then the structures $(\cC^V; \in, r)_{r\in \RR^V}$ and $(\cC^{V[G]}; \in, r)_{r\in \RR^V}$ have the same $\Sigma_2$-theory.
\end{theorem}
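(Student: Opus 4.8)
The plan is to run a two-supercompact amalgamation argument: I would first establish the conclusion for the \emph{local} Chang models $\cC^{V_\kappa}$ and $\cC^{V_\kappa[G]}$, where \autoref{gen-abs-lemma} and \autoref{gen-abs-lemma0} do the heavy lifting, and then transfer it to the full Chang models $\cC^V$ and $\cC^{V[G]}$ by reflection. The one-directional absoluteness of $\Sigma_2$ formulas is not enough on its own, so the whole point is to sandwich both $\cC^V$ and $\cC^{V[G]}$ between a common collapse extension reached from two sides.

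First I would fix the set forcing $\PP$ and use the large cardinal hypothesis to choose two supercompact cardinals $\delta<\kappa$ with $\PP\in V_\delta$. The choice of $\kappa$ is the delicate point: I want $\kappa$ to reflect the $\Sigma_2$-theory of the Chang model, i.e.\ $\cC^{V_\kappa}\prec_{\Sigma_2}\cC^V$, and, reading this off the forcing relation for $\PP$ (which is definable in $V$ since $\PP$ is a set), simultaneously $1_\PP\forces\cC^{\check V_\kappa[\dot G]}\prec_{\Sigma_2}\cC^{V[\dot G]}$. The class $C$ of $\kappa$ enjoying both reflection properties is closed and unbounded and can be arranged to be $\Pi_2$-definable (with $\PP$ and $\delta$ as parameters), so that the hypothesis supplies a supercompact $\kappa\in C$ above $\delta$. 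With $\delta<\kappa$ fixed, I would apply \autoref{gen-abs-lemma0} to obtain, in some outer model, a $V$-generic $K_0\subseteq\Col(\omega,{<}V_\delta)$ with an elementary $j_0\colon\cC^{V_\kappa}\to\cC^{V_\kappa[K_0]}$, and \autoref{gen-abs-lemma} to obtain a $V[G]$-generic $K\subseteq\Col(\omega,{<}V_\delta)$ with an elementary $j_1\colon\cC^{V_\kappa[G]}\to\cC^{V_\kappa[G][K]}$. These Woodin-style embeddings are the identity on the reals of their domains---their critical points are the respective values of $\omega_1$, which exceed the rank of any real---so each $j_i$ fixes $\RR^V$ pointwise. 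Hence, for any $\Sigma_2$ sentence $\varphi$ with parameters $\vec r\in\RR^V$, one has $\cC^{V_\kappa}\models\varphi(\vec r)\iff\cC^{V_\kappa[K_0]}\models\varphi(\vec r)$ and $\cC^{V_\kappa[G]}\models\varphi(\vec r)\iff\cC^{V_\kappa[G][K]}\models\varphi(\vec r)$.

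Next I would glue the two collapse extensions. Because $\PP\in V_\delta$ and $\Col(\omega,{<}V_\delta)$ adds surjections from $\omega$ onto every member of $V_\delta$, the collapse absorbs $\PP$: $V[G][K]=V[K^*]$ for some $V$-generic $K^*\subseteq\Col(\omega,{<}V_\delta)$, whence $\cC^{V_\kappa[G][K]}=\cC^{V_\kappa[K^*]}$. Finally, $\Col(\omega,{<}V_\delta)$ is weakly homogeneous and $\cC^{V_\kappa[\cdot]}$ is definable from the parameter $\check V_\kappa$, so the truth value of $\cC^{V_\kappa[\dot K]}\models\varphi(\check{\vec r})$ is decided by $1$ and is therefore the same whether computed from $K^*$ or from $K_0$. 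Chaining the biconditionals,
\begin{align*}
\cC^V\models\varphi(\vec r) &\iff \cC^{V_\kappa}\models\varphi(\vec r) \iff \cC^{V_\kappa[K_0]}\models\varphi(\vec r)\\
&\iff \cC^{V_\kappa[G][K]}\models\varphi(\vec r) \iff \cC^{V_\kappa[G]}\models\varphi(\vec r) \iff \cC^{V[G]}\models\varphi(\vec r),
\end{align*}
which is exactly the asserted agreement of the $\Sigma_2$-theories with real parameters from $V$.

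I expect the reflection step to be the main obstacle. The amalgamation, the absorption of $\PP$ by the collapse, and the homogeneity argument are robust and essentially transcribe the $\ZFC$ proof; what requires genuine care is verifying that the class of suitable $\kappa$ is $\Pi_2$-definable and club. This amounts to bounding the logical complexity of the predicate ``$\cC^{V_\kappa}\prec_{\Sigma_2}\cC^V$'' (and of its forced analogue over $V[\dot G]$) using the $\ZF$-definition $\cC=L(\power_{\omega_1}(\Ord))$ together with L\'evy-style reflection inside $\cC$, and then checking that this complexity is low enough to be captured by the stationarity-of-supercompacts hypothesis exactly as stated. The restriction to the $\Sigma_2$-theory in the conclusion is precisely the level at which this reflection, and hence the whole argument, is available.
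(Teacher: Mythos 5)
Your proposal is correct and follows essentially the same route as the paper's proof: reflect to $\cC^{V_\kappa}$ and $\cC^{V_\kappa[G]}$ using a supercompact $\kappa$ in the relevant $\Pi_2$-definable club class (with a second supercompact $\delta<\kappa$ such that $\PP\in V_\delta$), apply \autoref{gen-abs-lemma} and \autoref{gen-abs-lemma0}, amalgamate via the forcing equivalence of $\PP\times\Col(\omega,{<}V_\delta)$ with $\Col(\omega,{<}V_\delta)$, and conclude by weak homogeneity. The only (cosmetic) difference is that the paper argues by contradiction, reflecting a single putative counterexample $\Sigma_2$ sentence rather than arranging $\Sigma_2$-elementarity of $\cC^{V_\kappa}$ in $\cC^V$ for the whole theory at once.
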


\begin{proof} Suppose, towards a contradiction, that $\PP$ forces that $(\cC^V; \in, r)_{r\in \RR^V}$ and $(\cC^{V[\dot G]}; \in, r)_{r\in \RR^V}$ disagree on the truth value of some sentence $\exists x\forall y\varphi(x, y)$, where $\varphi$ is a restricted formula. By our large cardinal assumption we may then fix a supercompact cardinal $\kappa$ such that 
 \[(\cC^{V_\kappa}; \in, r)_{r\in \RR^V}\models\exists x\forall y\varphi(x,y)\iff(\cC^{V_\kappa[G]}; \in, r)_{r\in \RR^{V}}\models \lnot\exists x\forall y\varphi(x,y)\] and such that there is a supercompact cardinal $\delta<\kappa$ such that $\PP\in V_\delta$.
 
We show that $(\cC^{V_{\kappa}}; \in, r)_{r\in\RR^V}$ and $(\cC^{V_{\kappa}[G]}; \in, r)_{r\in \RR^{V}}$ are elementarily equivalent, which will be a contradiction. 

Let $X$ be $V_\delta$ as computed in $V$. By \autoref{gen-abs-lemma} and \autoref{gen-abs-lemma0} we know that there are, in some outer model, filters $K,K'\subseteq\Col(\omega,{<}X)$ which are $V$-generic and $V[G]$-generic, respectively, for which there are elementary embeddings \[j\colon\cC^{V_{\kappa}}\to \cC^{V_{\kappa}[K]}\ \text{ and }\ j'\colon\cC^{V_{\kappa}[G]} \to \cC^{V_{\kappa}[G][K']}.\] Since $\PP\times\Col(\omega,{<}X)$ and $\Col(\omega, {<}X)$ are forcing--equivalent, there is a $V$-generic filter $K''\subseteq\Col(\omega, {<}X)$ such that $V_\kappa[G][K']=V_\kappa[K'']$. But then, by the weak homogeneity of $\Col(\omega,{<}X)$, the theories of $(\cC^{V_{\kappa}}; \in, r)_{r\in\RR^V}$ and $(\cC^{V_{\kappa}[G]}; \in, r)_{r\in \RR^{V}}$ are the same.
\end{proof}

Using a similar argument one can prove the following.

\begin{theorem}[$\ZF+\DC$]\label{gen-abs+}
Suppose there is, for every closed and unbounded class of ordinals $C$, a supercompact cardinal $\kappa$ such that $\kappa\in C$. Then, for every set-forcing $\PP$ and every $V$-generic filter $G\subseteq\PP$, if $V[G]\models\DC$, then the structures $(\cC^V; \in, r)_{r\in \RR^V}$ and $(\cC^{V[G]}; \in, r)_{r\in \RR^V}$ are elementarily equivalent.
\end{theorem}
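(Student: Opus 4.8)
The plan is to run the proof of \autoref{gen-abs} essentially verbatim; the only genuinely new ingredient is the reflection step, and it is exactly this step that forces the strengthening of the large-cardinal hypothesis. The first observation is that the concluding computation in the proof of \autoref{gen-abs}---the one using \autoref{gen-abs-lemma}, \autoref{gen-abs-lemma0}, the forcing-equivalence of $\PP\times\Col(\omega,{<}X)$ and $\Col(\omega,{<}X)$, and the weak homogeneity of $\Col(\omega,{<}X)$---never used that the sentence involved is $\Sigma_2$. Thus, for \emph{any} pair of supercompact cardinals $\delta<\kappa$ with $\PP\in V_\delta$, writing $X=V_\delta$, that computation produces in an outer model the elementary embeddings $j\colon\cC^{V_\kappa}\to\cC^{V_\kappa[K]}$ and $j'\colon\cC^{V_\kappa[G]}\to\cC^{V_\kappa[G][K']}$, and the ensuing chain of identifications shows that $(\cC^{V_\kappa};\in,r)_{r\in\RR^V}$ and $(\cC^{V_\kappa[G]};\in,r)_{r\in\RR^V}$ have the \emph{same complete theory}, not merely the same $\Sigma_2$-theory.

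With this local equivalence in hand, I would set up the contradiction as in \autoref{gen-abs}. Suppose $\PP$ forces that $(\cC^V;\in,r)_{r\in\RR^V}$ and $(\cC^{V[\dot G]};\in,r)_{r\in\RR^V}$ disagree about some sentence $\psi$, now of arbitrary quantifier complexity and mentioning finitely many reals from $\RR^V$, and fix a $V$-generic $G\subseteq\PP$ realizing the disagreement, say $\cC^V\models\psi$ and $\cC^{V[G]}\models\lnot\psi$. It then suffices to produce supercompact cardinals $\delta<\kappa$ with $\PP\in V_\delta$ such that $\cC^{V_\kappa}$ reflects the truth of $\psi$ in $\cC^V$ while $\cC^{V_\kappa[G]}$ reflects the truth of $\psi$ in $\cC^{V[G]}$: the previous paragraph then gives $\cC^{V_\kappa}\equiv\cC^{V_\kappa[G]}$, contradicting $\cC^{V_\kappa}\models\psi$ and $\cC^{V_\kappa[G]}\models\lnot\psi$.

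To find such $\kappa$ I would argue as follows. The finitely many real parameters lie in every $\cC^{V_\kappa}$ and every $\cC^{V_\kappa[G]}$ with $\kappa>\omega+1$, since reals are countable sets of ordinals of low rank; moreover the towers $\kappa\mapsto\cC^{V_\kappa}$ and $\kappa\mapsto\cC^{V_\kappa[G]}$ are increasing with unions $\cC^V$ and $\cC^{V[G]}$, and continuous at ordinals $\kappa$ with $\cf(\kappa)>\omega$. By the L\'evy reflection theorem, for the fixed $\psi$ the class of such $\kappa$ at which both towers compute the truth value of $\psi$ correctly contains a club. Expressing the $V[G]$-side condition inside $V$ through the forcing relation, with $\PP$ as a parameter, shows that this is a genuine club \emph{class of $V$}; its only difference from the $\Sigma_2$ case of \autoref{gen-abs} is that its definition has quantifier complexity exceeding $\Pi_2$. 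This is precisely why the hypothesis is upgraded from ``a supercompact in every $\Pi_2$-definable club'' to ``a supercompact in every club''. Applying the hypothesis to the tail club above $\rank(\PP)$ first yields a supercompact $\delta_0$ with $\PP\in V_{\delta_0}$; intersecting the reflecting club with $\{\kappa\mid\kappa>\delta_0\}$ and applying the hypothesis again yields a supercompact $\kappa$ in the intersection, and taking $\delta=\delta_0$ completes the construction.

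The hard part is the reflection step of the previous paragraph. Two things must be checked with care: first, the continuity of the two towers of local Chang models at ordinals of uncountable cofinality, so that the L\'evy reflection theorem genuinely produces a club; and second, that the statement ``$\cC^{V_\kappa[\dot G]}$ computes the truth value of $\psi$ as $\cC^{V[\dot G]}$ does'' can be rendered as a property of $\kappa$ definable in $V$ via the forcing relation, so that the $V$-hypothesis on supercompacts applies to the resulting club. Once this is secured, the supercompact $\kappa$ lands in the reflecting club by the full (non-$\Pi_2$-restricted) large-cardinal assumption, and everything else is the unchanged local computation imported from \autoref{gen-abs}.
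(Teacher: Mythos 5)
Your proposal is correct and is essentially the paper's own argument: the paper proves \autoref{gen-abs+} by the phrase ``using a similar argument,'' and the similar argument is exactly what you describe --- the concluding computation in \autoref{gen-abs} already yields full elementary equivalence of $(\cC^{V_\kappa};\in,r)_{r\in\RR^V}$ and $(\cC^{V_\kappa[G]};\in,r)_{r\in\RR^V}$, and the only change is that reflecting an arbitrary sentence requires a supercompact in a club of higher definitional complexity than $\Pi_2$, which is precisely why the hypothesis is upgraded (and is read as a scheme over definable clubs, as the paper notes after the theorem). Your identification of the two points needing care (continuity of the towers at uncountable cofinality, and rendering the $V[G]$-side club as a $V$-definable class via the forcing relation) matches the intended proof.
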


Note the second order character of the hypothesis of \autoref{gen-abs+}. This hypo\-the\-sis can of course be taken to be an infinite scheme asserting that for every formula $\Theta(x)$ with parameters defining a closed and unbounded class of ordinals there is some supercompact cardinal $\kappa$ such that $\Theta(\kappa)$. Many reasonable large cardinal assumption could be used here in place of this. Nevertheless, it is not clear to us whether the existence of one supercompact cardinal suffices to yield the conclusion. 

The following version of \autoref{gen-abs} for $L(\RR)$ can be derived by the same argument as in the proof of \autoref{gen-abs} together with the fact that, under the existence of a supercompact cardinal, $\RR^\#$ exists in any set-forcing extension, and together with Woodin's classical result that the Axiom of Determinacy holds in $L(\RR)$ assuming $\ZFC$ and the existence of infinitely many Woodin cardinals below a measurable cardinal.

\begin{theorem}[$\ZF+\DC$] Suppose there are two supercompact cardinals. Then, for every set-forcing $\PP$ and every $V$-generic filter $G\subseteq\PP$-generic filter $G$ over $V$, if $V[G]\models\DC$, then

\begin{enumerate}
\item the structures $(L(\RR)^V; \in, r)_{r\in \RR^V}$ and $(L(\RR)^{V[G]}; \in, r)_{r\in \RR^V}$ are elementarily equivalent, and
\item the Axiom of Determinacy holds in $L(\RR)^{V[G]}$.
\end{enumerate}
\end{theorem}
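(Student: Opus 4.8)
The plan is to re-run the proof of \autoref{gen-abs} essentially verbatim, with the inner model $L(\RR)$ in place of the Chang model $\cC$ throughout, swapping Woodin's $\ZFC$ theorem on the generic absoluteness of $\cC$ from a supercompact for its (equally classical) $L(\RR)$-counterpart, and then reading off part~(2) from the elementary equivalence established in part~(1). The first step is to prove the $L(\RR)$-versions of \autoref{gen-abs-lemma} and \autoref{gen-abs-lemma0}. Taking $\delta<\kappa$ to be the two supercompacts, $\PP\in V_\delta$ a $\DC$-preserving forcing and $G\subseteq\PP$ generic, I would form exactly the $\ZFC$-izing iteration $\cQ_\kappa^\PP$ of \autoref{forcing-AC}, pass to a $V[G]$-generic $H\subseteq\cQ_\kappa^\PP$, and invoke clauses~(5) and~(7) to obtain that $\delta$ is still supercompact and $V_\kappa[G][H]\models\ZFC$. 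Inside this $\ZFC$ model Woodin's classical theorem then supplies, in an outer model, a generic $J\subseteq\Col(\omega,{<}\delta)$ and an elementary embedding $L(\RR)^{V_\kappa[G][H]}\to L(\RR)^{V_\kappa[G][H][J]}$. The two bookkeeping steps transfer with only simplifications: since $\cQ_\kappa^\PP$ is $\sigma$-closed over $V[G]$ it adds no reals, so $L(\RR)^{V_\kappa[G][H]}=L(\RR)^{V_\kappa[G]}$ (this replaces the appeal to $\cC$ being closed under $\omega$-sequences), and the Claim's analysis of $\Col(\omega,{<}X)$-names for $\omega$-sequences of ordinals specialises to names for reals. \autoref{gen-abs-lemma0} is obtained identically, without the parameter $G$.

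With the two embedding lemmas in hand, part~(1) is the homogeneity argument of \autoref{gen-abs}. Given a putative disagreement between $(L(\RR)^V;\in,r)_{r\in\RR^V}$ and $(L(\RR)^{V[G]};\in,r)_{r\in\RR^V}$, I would reflect it down to the larger supercompact $\kappa$ using $L(\RR)^{V_\kappa}\prec L(\RR)^V$ and $L(\RR)^{V_\kappa[G]}\prec L(\RR)^{V[G]}$; this reflection---which holds for supercompact $\kappa$ because $\RR^{V_\kappa}=\RR^V$ and $L(\RR)$ sits low in the hierarchy---is precisely what lets two supercompacts replace the $\Pi_2$-definable club hypothesis of \autoref{gen-abs} and upgrade the conclusion from $\Sigma_2$-agreement to full elementary equivalence. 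Writing $X=V_\delta$, the two embeddings, together with the weak homogeneity of $\Col(\omega,{<}X)$ and the forcing-equivalence of $\PP\times\Col(\omega,{<}X)$ with $\Col(\omega,{<}X)$, then force $(L(\RR)^{V_\kappa};\in,r)_r$ and $(L(\RR)^{V_\kappa[G]};\in,r)_r$ to have the same theory, contradicting the reflected disagreement. The hypothesis that $\RR^{\#}$ exists in every set-forcing extension (a consequence of one supercompact) is used to ensure that the reals, and hence the first-order theory of $L(\RR)$, are coherent across all of these extensions, so that the embeddings really are fully elementary.

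For part~(2) I would transfer determinacy rather than establish it in $V[G]$ directly. Since $\AD$ is parameter-free, part~(1) reduces the goal $\AD^{L(\RR)^{V[G]}}$ to $\AD^{L(\RR)^{V}}$; and because the $\ZFC$-izing collapse $\cQ_\kappa$ is $\sigma$-closed and hence adds no reals, $L(\RR)^{V_\kappa[H]}=L(\RR)^{V_\kappa}$ satisfies $\AD$ exactly when $L(\RR)^V$ does. In the $\ZFC$ model $V_\kappa[H]$ the surviving supercompact $\delta$ is measurable and has infinitely many Woodin cardinals below it, so Woodin's theorem yields $\AD^{L(\RR)^{V_\kappa[H]}}$; unwinding the equalities of reals gives $\AD^{L(\RR)^V}$, and part~(1) then delivers $\AD^{L(\RR)^{V[G]}}$.

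The hard part, exactly as for \autoref{gen-abs}, lies in the choiceless bookkeeping: one must be certain that \autoref{forcing-AC} provides a $\DC$-preserving collapse that simultaneously fixes the reals (through $\sigma$-closure) and keeps $\delta$ supercompact, so that the two external $\ZFC$ theorems of Woodin---generic absoluteness of $L(\RR)$ from a supercompact, and $\AD^{L(\RR)}$ from infinitely many Woodins below a measurable---can be applied off the shelf inside $V_\kappa[G][H]$. The one genuinely new ingredient over the Chang-model proof is verifying, in $\ZF+\DC$, the reflection $L(\RR)^{V_\kappa}\prec L(\RR)^V$ that makes two supercompacts suffice for the whole theory of $L(\RR)$.
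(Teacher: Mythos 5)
Your proposal is correct and follows essentially the same route as the paper, whose own ``proof'' is only a one-sentence remark instructing the reader to repeat the argument of \autoref{gen-abs} for $L(\RR)$, invoking the existence of $\RR^\#$ in all set-forcing extensions (from one supercompact) and Woodin's theorem that $\AD$ holds in $L(\RR)$ under $\ZFC$ plus infinitely many Woodin cardinals below a measurable. You correctly identify the role of $\RR^\#$ in securing the full reflection $L(\RR)^{V_\kappa}\prec L(\RR)^V$ that lets two supercompacts replace the club-of-supercompacts hypothesis, which is exactly the point the paper's sketch is gesturing at.
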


\section{Some concluding remarks}\label{concluding} The construction of symmetric submodels of forcing extensions---also known as symmetric extensions---is another way to use forcing to prove independence over base theories $T$.  Such a symmetric submodel is a model $N$ such that $M\subseteq N\subseteq M[G]$, for a ground model $M$ satisfying our base theory and a generic filter $G$ over $M$.\footnote{It should be remarked that not every such intermediate model is a symmetric extension, as was shown by the second author in \cite{Bristol}.} When $T$ is $\ZFC$, we of course do not get anything new by looking at symmetric extensions.  The reason is that if $M$ and $N$ both satisfy $\ZFC$, then $N$ is itself a forcing extension of $M$. On the other hand, if we drop the Axiom of Choice from our foundation, then the construction of symmetric extensions becomes a genuine new method for proving independence. In particular, retreating to a foundation which does not incorporate Choice allows for a richer and more complex generic absoluteness project; now we can also ask if such and such hypothesis implies that all symmetric extensions of $V$ (of some nice form) agree with $V$ on the theory of the Chang model, or we can ask \autoref{question-abs} below, etc. 

In this context, one first outcome of foundational significance from our result is that, in the presence of sufficiently strong large cardinals, the theory of the Chang model is invariant not also under $\DC$-preserving forcing, but also under $\DC$-preserving symmetric submodels of $\DC$-preserving forcing extensions.\footnote{See also \cite{DC} for a discussion about $\DC$-preserving symmetric extensions.}

\begin{theorem}\label{symm-gen-abs}
Suppose that $M$ satisfies $\ZF+\DC$ and the hypothesis of \autoref{gen-abs+}. Let $M[G]$ be a generic extension of $M$ such that $M[G]$ also satisfies $\DC$, and let $N$ be a corresponding $\DC$-preserving symmetric extension. Then $\cC^N$ and $\cC^M$ are elementarily equivalent.
\end{theorem}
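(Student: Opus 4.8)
The plan is to deduce the statement from \autoref{gen-abs+} applied twice, once with base model $M$ and once with base model $N$, exploiting the fact that a symmetric submodel sits as an \emph{intermediate} model through which the full extension factors as an honest forcing extension, so that elementary equivalence can be transported across it.

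First I would record the easy half: since $M[G]$ is a $\DC$-preserving set-generic extension of $M$, and $M$ satisfies $\ZF+\DC$ together with the large-cardinal hypothesis of \autoref{gen-abs+}, that theorem gives immediately that $\cC^M$ and $\cC^{M[G]}$ are elementarily equivalent. It therefore suffices to prove $\cC^N\equiv\cC^{M[G]}$, after which transitivity of elementary equivalence closes the argument.

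Next I would set up \autoref{gen-abs+} with $N$ in the role of the base model. This requires three things. (a) $N\models\ZF+\DC$: the $\DC$ clause is exactly the assumption that $N$ is a $\DC$-preserving symmetric extension. (b) The large-cardinal hypothesis holds in $N$: writing $\PP$ for the forcing and fixing $\theta$ with $\PP\in V_\theta^M$, every supercompact cardinal of $M$ above $\theta$ remains supercompact, in the sense of \autoref{def-supercompact}, in $M[G]$ by a Levy--Solovay argument \cite{LevySolovay}, and the associated embeddings lift to, and retain their closure within, the symmetric submodel $N$ (as in the computation underlying the proof that $\Sigma_2$-absoluteness can fail, cf.\ \cite{HayutKaragila:2018}); one then checks that this surviving proper class of supercompacts still meets every closed unbounded class of ordinals of $N$. (c) $M[G]$ is realised as a genuine forcing extension of $N$: because $N$ is a \emph{symmetric} submodel of $M[G]$, and not merely an arbitrary $\ZF$-intermediate model — which by \cite{Bristol} need not be a forcing extension at all — there is a forcing notion $\QQ\in N$ and an $N$-generic filter $G'$ with $M[G]=N[G']$; moreover $M[G]\models\DC$ is given, so $\QQ$ is $\DC$-preserving over $N$. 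Granting (a)--(c), \autoref{gen-abs+} applied in $N$ to the extension $N[G']=M[G]$ yields $\cC^N\equiv\cC^{N[G']}=\cC^{M[G]}$, as required.

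The hard part will be item (c), together with the club-meeting clause of (b). Verifying that a $\DC$-preserving symmetric extension $N$ sits inside $M[G]$ as a bona fide forcing extension — identifying the quotient $\QQ\in N$ and checking its genericity over $N$ — is the delicate structural point, and is precisely where the hypothesis that $N$ is symmetric (rather than an arbitrary intermediate model) is used; see \cite{DC}. One must also confirm that the preservation in (b) is robust enough that the inherited supercompacts continue to meet every closed unbounded class definable over $N$, since $N$ may carry clubs not present in $M$. An alternative route, bypassing (c), would mirror the proof of \autoref{gen-abs+} itself: choose supercompacts $\delta<\kappa$ with $\PP\in V_\delta^M$, collapse with the weakly homogeneous $\Col(\omega,{<}V_\delta^M)$ over each of $M$, $N$, and $M[G]$, and argue that this collapse absorbs the symmetric construction so that the three collapse extensions share a common Chang theory; there the main difficulty shifts to showing that the collapse absorbs a symmetric extension in the same manner that it absorbs $\PP$.
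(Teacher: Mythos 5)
Your proposal is correct and follows essentially the same route as the paper: preserve the large-cardinal hypothesis into $N$, realise $M[G]$ as a $\DC$-preserving forcing extension of $N$, apply \autoref{gen-abs+} twice, and conclude by transitivity of elementary equivalence. The step you flag as the delicate point, that a symmetric submodel $N$ with $M\subseteq N\subseteq M[G]$ has $M[G]$ as a genuine generic extension, is exactly what the paper discharges by citing Grigorieff's theorem \cite{Grigorieff:1975}.
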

\begin{proof}
As we remarked in footnote~\ref{footnote-Levy-Solovay}, the relevant large cardinal hypothesis  is preserved when moving from $M$ to $N$.
It follows, therefore, that $N$ will satisfy the hypothesis of \autoref{gen-abs+}. By a theorem of Grigorieff in \cite{Grigorieff:1975}, $M[G]$ is a generic extension of $N$. As we assumed that all three models satisfy $\DC$, this is a $\DC$-preserving forcing extension, so by \autoref{gen-abs+}, $\cC^N$ and $\cC^{M[G]}$ are elementarily equivalent, as well as $\cC^M$ and $\cC^{M[G]}$. As elementary equivalence is, well, an equivalence relation, it follows that $\cC^M$ and $\cC^N$ are also elementarily equivalent.
\end{proof}

This observation leads to the following question.

\begin{question}\label{question-abs}
Suppose that $M$ satisfies $\ZF+\DC$ and that the theory of the Chang model is generically absolute under $\DC$-preserving set-forcing. Let $N$ be a $\DC$-preserving symmetric extension of $M$. Is $\cC^M$ elementarily equivalent to $\cC^N$?
\end{question}

It should be noted that generic absoluteness, in general, does not imply the existence of a proper class of large cardinals in the ground model. In fact, it does not imply even the existence of an inaccessible cardinal. For example, suppose $V$ is a model of $\ZFC$ in which there is a supercompact cardinal $\kappa$ but there are no inaccessible cardinals above $\kappa$. Suppose $G$ is a $\Col(\omega_1, {<}\kappa)$-generic filter over $V$. Then generic absoluteness for the Chang model holds in $V[G]$ but in $V[G]$ there are no inaccessible cardinals.  

The proofs of our main results in the previous section can be seen as an illustration of the thesis that, in choiceless contexts, one obtains fragments of choice from sufficiently strong large cardinal axioms (see \cite{Woodin:2010}). Roughly speaking, they show that if $\DC$ holds and  a large enough supply of supercompact cardinals is available, then one can force $\AC$, over suitable initial segments $V_\alpha$ of the universe, while preserving large cardinals by a forcing---and here is a crucial point---which does not add $\omega$-sequences of ordinals and therefore leaves the corresponding versions of the Chang model unchanged. One can then appeal to the classical arguments for proving generic absoluteness for this inner model over $\ZFC$. The reason we needed to force full choice, at least over some suitable $V_\alpha$, is that in the classical arguments for proving generic absoluteness at this level---which we ultimately appeal to---we need {\L}o\'{s}'s Theorem, between $V$ and some generic ultrapower of $V$, and {\L}o\'s's Theorem is not provable in $\ZF+\DC$.\footnote{By the work of Howard in \cite{Howard:1975}, $\AC$ is equivalent to the conjunction of {\L}o\'s's Theorem and the Boolean Prime Ideal theorem, and by \cite{Pincus:1977} the Boolean Prime Ideal theorem is consistent with $\DC$ together with the failure of $\AC$, and thus $\ZF+\DC$ does not prove {\L}o\'s's Theorem.}

The extra large cardinal strength---with respect to the large cardinals used in the classical proofs, namely Woodin cardinals---in our present proofs, coming from the supercompact cardinals, is put to work at forcing full choice. It is perhaps interesting to note that no reduction in the large cardinals needed seems to be available to our arguments if one replaces $\DC$ by some higher version $\DC_\lambda$ of it, for some fixed infinite cardinal $\lambda$, even if we restrict to forcing notions preserving $\DC_\lambda$, as we will still need to force $\AC$ in order to be able to apply {\L}o\'s's Theorem. It is only when we throw in the Axiom of Choice that we manage to drop our large cardinal assumption from many supercompact cardinals down to Woodin cardinals.  

On the other hand, and as we saw in \autoref{Motivation and limitations}, $\DC$ is provably needed if we are to derive generic absoluteness for the Chang model relative to any large cardinals whatsoever. The place where our argument breaks down if we remove $\DC$ from our hypotheses is that we will still be able to force $\ZFC$ over initial segments $V_\alpha$ of $V$, but we will definitely change the Chang model when doing so as we cannot assume, in the absence of $\DC$, that the forcing for getting $\AC$ is $\omega_1$-distributive. 

The above considerations show that it is reasonable to take $\ZF+\DC$, rather than $\ZFC$, as a vantage point in which to look at the generic absoluteness phenomenon for the Chang model in the presence of large cardinals. One could in fact assess the results in this second part of the paper as showing that $\DC$ is the bare minimum amount of choice enabling large cardinals to \textit{do their job} of fixing the theory of the Chang model. This is another argument in favour of the naturalness of $\ZF+\DC$: This theory not only provides the right framework for developing classical analysis, but is also the right base theory over which to derive generic absoluteness for $L(\mathbb R)$---where analysis lives---and even the Chang model, and thereby safeguarding truth in this inner model---and therefore truth in analysis---from the independence phenomenon.    

We will conclude by pointing out an obstacle for extending this type of results. The classical arguments for deriving generic absoluteness in $\ZFC$ for Chang models from large cardinals, relative to all set-forcings, work of course \textit{only} for the $\omega$-Chang model, which is precisely what we can capture---in the sense of leaving unchanged---by a $\omega_1$-distributive forcing assuming $\DC$ holds.\footnote{The $\omega_1$-Chang is correct about whether or not $\CH$ holds and, as mentioned in \autoref{Motivation and limitations}, both $\CH$ and $\lnot\CH$ can be forced while preserving all large cardinals.} On the other hand it is possible to prove reasonable generic absoluteness results for higher Chang models, provided the scope of the result is restricted to apply to forcing extension via a carefully chosen---but, arguably, still natural---class of partial orders. Specifically, for any given infinite regular cardinal $\kappa$ and any definable sufficiently nice class $\Gamma$ of forcing notions one can define a certain strong forcing axiom $\CFA(\Gamma)_\kappa$ which in particular implies the usual forcing axiom for meeting collections of $\kappa$-many dense subsets of forcings in $\Gamma$. One can then prove, under the assumption that sufficiently strong large cardinals exist and that  $\CFA(\Gamma)_\kappa$ holds, that the theory of the $\kappa$-Chang model is invariant with respect to forcing extensions via partial orders in $\Gamma$ which, in addition, preserve our forcing axiom $\CFA(\Gamma)_\kappa$.\footnote{Compare this to the restriction of our generic absoluteness result for the $\omega$-Chang model to forcing notions preserving $\DC$.} 
Moreover, for $\kappa=\omega_1$, many examples of nice classes $\Gamma$ of forcing notions have been found for which the corresponding forcing axiom $\CFA(\Gamma)_{\omega_1}$ is consistent (see \cite{Viale} and \cite{Aspero-Viale} for these results). It is totally unclear how one may want to try to adapt these results to a $\DC_\lambda$ context. The problem is that the formulation of the relevant strong forcing axiom completely loses its power in the absence of \L{}o\'{s}'s Theorem, to be applied, in the proofs, to certain ultrapowers coming from towers of ideals  belonging to our class $\Gamma$.\footnote{To be somewhat more specific, the forcing axioms of the form $\CFA(\Gamma)_\kappa$ assert that every forcing in $\Gamma$ can be completely embedded, with quotient in $\Gamma$, in a tower $\mathcal T$ of ideals on sets of size $\kappa$ such that $\mathcal T\in\Gamma$ and such that, moreover, $\mathcal T$ has certain nice structural properties within the class of forcings in $\Gamma$.} Finally, a (vague) question.

\begin{question} Can we find reasonable axioms $R$ for which one can prove, in $\ZF+\DC_\lambda$, for some $\lambda\geq\omega_1$, that if $R$ holds, then no forcing notion in some natural class which moreover preserves $R$ can change the theory of the $\omega_1$-Chang model?
\end{question}

\section*{Acknowledgements}
The authors would like to thank both Mirna D\v{z}amonja and Yasuo Yoshinobu for their helpful suggestions. Additional thanks are due to the organizers of the HIF programme for extending an invitation to Cambridge in October 2015, where the authors first met and started the work that culminated in this paper. Thanks are also due to the anonymous referee whose remarks helped elevate the quality of the paper in content and exposition. 
\bibliographystyle{amsplain}
\providecommand{\bysame}{\leavevmode\hbox to3em{\hrulefill}\thinspace}
\providecommand{\MR}{\relax\ifhmode\unskip\space\fi MR }
\providecommand{\MRhref}[2]{%
  \href{http://www.ams.org/mathscinet-getitem?mr=#1}{#2}
}
\providecommand{\href}[2]{#2}

\end{document}